\theoremstyle{thmstyleone}%
\newtheorem{theorem}{Theorem}
\newtheorem{lemma}{Lemma}
\theoremstyle{thmstyletwo}%
\newtheorem{remark}{\bf Remark}[section]%
\theoremstyle{thmstylethree}%
\begin{document}

\title[Structure preserving schemes for the phase field crystal model]{Stability and convergence analysis of unconditionally original energy dissipative implicit-explicit Runge--Kutta methods for the phase field crystal models without Lipschitz assumptions.}


\author[1]{\fnm{Xiaoli} \sur{Li}}\email{xiaolimath@sdu.edu.cn}

\author[2]{\fnm{Kaiyi} \sur{Niu}}\email{202411892@mail.sdu.edu.cn}

\author*[3]{\fnm{Jiang} \sur{Yang}}\email{yangj7@sustech.edu.cn}

\affil[1]{\orgdiv{School of Mathematics and State Key Laboratory of Cryptography and Digital Economy Security}, \orgname{Shandong University}, \orgaddress{\street{Jinan}, \city{Shandong}, \postcode{250100}, \country{P.R. China}}}

\affil[2]{\orgdiv{School of Mathematics}, \orgname{Shandong University}, \orgaddress{\street{Jinan}, \city{Shandong}, \postcode{250100}, \country{P.R. China}}}

\affil[3]{\orgdiv{Department of Mathematics, SUSTech International Center for Mathematics \& National Center for Applied Mathematics Shenzhen (NCAMS), Guangdong Provincial Key Laboratory of Computational Science and Material Design}, \orgname{Southern University of Science and Technology}, \orgaddress{\street{Shenzhen}, \city{Guangdong}, \postcode{518000}, \country{P.R. China}}}


\abstract{The phase field crystal (PFC) method is an efficient technique for simulating the evolution of crystalline microstructures at atomistic length scales and diffusive time scales. Due to the high-order derivatives (sixth-order) and the strongly nonlinear term (locally Lipschitz), developing high-order stable schemes and establishing corresponding error estimates is particularly challenging. In this study, we first establish a general framework for high-order implicit-explicit (IMEX) Runge--Kutta methods that preserves the original energy dissipation for {\bf auxiliary} models with globally Lipschitz truncations on the nonlinear term. By employing the Sobolev embedding theorem and Cauchy's interlace theorem, we demonstrate that the solutions of the auxiliary models are identical to the solutions of the original models without the globally Lipschitz property, provided that the free energy of the initial value is well-defined. Furthermore, we rigorously prove the uniform boundedness of the solution in the $L^{\infty}$ norm and unconditional global-in-time stability. This allows for a straightforward framework to derive optimal arbitrarily high-order $L^{\infty}$ error estimate without relying on the Lipschitz assumption. In particular, compared to existing literature, the argument for error estimation is presented in a much more simplified and elegant manner, without imposing any constraints on time-step size or mesh grid size. In fact, the reported framework, built upon the truncated auxiliary problem for the original model, can be directly extended to a wide range of gradient flows, including Allen--Cahn equations, nonlocal PFC models, and epitaxial thin film growth equations, providing unconditional energy dissipation without enforcing Lipschitz continuity. Finally, we present numerical examples to validate our analytical results and demonstrate the effectiveness of capturing long-time dynamics.}

\keywords{Phase field crystal equation, Implicit-explicit Runge--Kutta method, Unconditional energy dissipation, $L^{\infty}$-convergence}


\pacs[MSC Classification]{65M12, 65M15, 35G25, 35Q92}

\maketitle

\section{Introduction}\label{sec1}

The phase field crystal (PFC) model, which was developed in \cite{elder2004modeling,elder2002modeling} by Elder, offers an efficient framework to simulate the evolution of crystalline microstructures on atomic length scales over diffusive time scales. In this model, the phase field variable is introduced to characterize the transition from the liquid phase to the crystalline phase. The PFC equation model can be regarded as the $H^{-1}$ gradient flow of the Swift-Hohenberg type \cite{swift1977hydrodynamic}
\begin{align}\label{eq1.1}
	\mathcal{E}\left(\phi\right)=\int_{\Omega } \left(\frac{1}{4}\phi^4 -\frac{\varepsilon }{2}\phi^2 +\frac{1}{2}{\phi\left(I+\Delta \right)}^2 \phi\right)\mathrm{dx},
\end{align}
where $\Omega \in \mathbb{R}^d\left(d=2,3 \right)$ is a bounded domain, $\phi:\Omega\rightarrow\mathbb{R}$ is the  dimensionless density field, and $\epsilon \in(0,1)$
is a constant proportional to dimensionless undercooling. Hence a wide range of problems have been studied by the PFC model, such as the dislocations motion, boundary structure, defects in materials, crystallization phenomena, and many others; see \cite{asadi2015review, emmerich2012phase, trautt2012coupled, provatas2007using, stefanovic2006phase}.

In this work, we consider the PFC model in the following form:
\begin{align}\label{eq1.2}
	&\frac{\partial \phi}{\partial t}=\Delta \mu,\;\;\;x\in \Omega ,\;t\in\left( 0,T\right],\\
	&\mu={\left(I+\Delta \right)}^2 \phi+f\left(\phi\right),\;\;\;x\in \Omega ,\;t\in\left( 0,T\right], \notag \\
	&\phi \left(x,0\right)=\phi^0 \left(x\right),\;\;\;x\in \Omega , \notag
\end{align}
where $T$ is the final time, $\mu$ is the chemical potential, $\phi\left(x,t\right)\in \Omega \times \left(0,T\right]$, $\phi_0$ is the initial data and $f(\phi)=\phi^3-\varepsilon \phi$. For simplicity, the model above is equipped with periodic boundary conditions or the homogeneous Neumann boundary condition $\frac{\partial \phi}{\partial n}=\frac{\partial \Delta \phi}{\partial n}=\frac{\partial \mu}{\partial n}=0$. As an $H^{-1}$ gradient flow with respect to the energy functional $\eqref{eq1.1}$, the PFC model satisfies the energy dissipation law, i.e. 
$$\frac{d}{dt}\mathcal{E}\left(\phi\right)=-\left\|\nabla \mu\right\|^2_{L^2}\leq0.$$
In addition, it is easy to see that this model is also mass conservative represented by 
$$\frac{d}{dt}\int_{\Omega }\phi \;\mathrm{dx}=0.$$
A closely related model is the Swift-Hohenberg equation \cite{lee2019energy}, which is an $L^2$-gradient flow for the functional free energy \eqref{eq1.1}. This equation dissipates this energy but violates mass conservation.

As a sixth-order nonlinear partial differential equation with a long time scale, the PFC equation cannot generally be solved analytically. Consequently, efficient and structure-preserving numerical methods are desirable and many works have been reported for the PFC equation. A widely employed numerical approach is based on the convex splitting technique of the energy functional. This idea was first proposed by Eyre in \cite{eyre1998unconditionally} for the Cahn-Hilliard equation, where the functional energy admits convex and concave decomposition. The convex component is treated implicitly, while the concave part is treated explicitly \cite{glasner2016improving}. There are generally two different ways of decomposing the energy
\begin{itemize}
	\item convex splitting (CS1)
	\begin{align*}
		E_c(u)=\int_{\Omega}\left( \frac{1}{4}u^4+\frac{1}{2}u\left(I+\Delta \right)^2u \right) \text{dx},\;\;\;E_e(u)=-\frac{\epsilon}{2}\int_{\Omega}u^2\;\mathrm{dx};
	\end{align*}
	\item convex splitting (CS2)
	\begin{align*}
		E_c(u)=\int_{\Omega}\left( \frac{1}{4}u^4+\frac{1-\epsilon}{2}u^2+\frac{1}{2}\left|\Delta u \right|^2 \right)\text{dx},\;\;\;E_e(u)\int_{\Omega}u\Delta u\;\mathrm{dx}. 
	\end{align*}
\end{itemize}
The first one was proposed in \cite{wise2009energy} and the corresponding first- and second-order schemes were established and analyzed in \cite{dong2018convergence,hu2009stable,wise2009energy}. The other one was first considered in \cite{vignal2015energy}, and then the first- and second-order schemes were developed in \cite{li2018second,shin2016first,vignal2015energy}. All these convex splitting techniques require an implicit treatment of the cubic term in $f(\phi)$ due to the convexity structure, which means that nonlinear iterations exist in implementation.

To obtain the linear and original energy stable scheme, the stabilization technique \cite{wise2009energy,xu2006stability} has been widely employed for gradient flows. The constructed scheme in \cite{wise2009energy} can be proven to satisfy
energy stability, provided that the stabilizing constant is sufficiently large, which is a requirement that depends on the uniform bound of the unknown numerical solutions. However, increasing the stabilizer coefficient significantly amplifies the truncation error, as extensively verified in numerical simulations. Moreover, a critical challenge arises in maintaining unconditional energy dissipation when applying stabilization techniques to high-order schemes in time \cite{akrivis2022error,li2020arbitrarily}. In addition, a class of methods based on the auxiliary function approach has been proposed for the PFC-related models and general gradient flows, including the invariant energy quadratization (IEQ) method \cite{yang2017linearly} and the scalar auxiliary variable (SAV) approach \cite{li2020stability,li2022efficient,shen2018scalar,shen2019new,akrivis2019energy}, which are all founded on the modified energy dissipation. To construct the linear, high-order and original energy dissipative schemes, a series of Runge--Kutta methods have been researched for PFC model and other gradient flows in recent years. For instance, by combining the energy quadratization (EQ) technique and a specific class of Runge--Kutta methods, a class of arbitrarily high-order algorithms for gradient flow models \cite{gong2019energy} was developed that unconditionally dissipates energy. Furthermore, as demonstrated in \cite{shin2017unconditionally}, the convex splitting Runge--Kutta (CSRK) scheme provides a robust and unified framework for the numerical solution of gradient flows. This approach ensures unconditional energy stability and can achieve high-order temporal accuracy, making it a highly effective method for such problems.

In fact, the PFC model is primarily employed for long-time simulations, which implies that the stability analyses should try to avoid relying on local in time properties. Theoretically, as the final simulation time increases, the reliability of the numerical solutions always diminishes in comparison to the exact solutions due to an exponential growth constant in convergence analysis. Particularly for gradient flow models, in the absence of an enforced Lipschitz condition, the boundedness of the numerical solution and stability results should be established by leveraging convergence results in conjunction with induction method, which typically guarantee only local-in-time stability. It has been widely recognized that developing long-time stable and efficient numerical schemes is crucial for designing algorithms capable of accurately capturing long-term statistical behavior \cite{coleman2024efficient,wang2010approximation,wang2012efficient}. Recently, Li et al. \cite{li2024globalintimeenergystabilityanalysis} presented a global-in-time energy estimate for the second-order accurate exponential time differencing Runge--Kutta (ETDRK2) numerical scheme for the PFC model, where the energy dissipation property is valid for any final time. Based on the stabilization technique from the existing work \cite{Qiao2024pfc} and the $H^2$ term existing in energy functional, the numerical solution generated by the
ETDRK2 scheme satisfies the global-in-time, but not unconditional energy stability since the artificial parameter depends only on the initial energy and the domain with an $O(1)$ constraint on time step. A second-order accurate global-in-time energy stable IMEX-RK method has also been proposed in \cite{doi:10.1137/24M1637623}. The analysis of the global-in-time energy stable RK method with mild constraints for the time steps in \cite{li2024globalintimeenergystabilityanalysis,doi:10.1137/24M1637623} is stage-by stage, which is rigorous but relatively lengthy to extend RK method to high-order schemes.

For the general gradient flows, Fu et al. \cite{fu2024energydiminishingimplicitexplicitrungekutta} constructed a class of high-order IMEX-RK methods for gradient flows under the assumption of the global or local Lipschitz continuity of the nonlinear term. Benefited from the stabilization technique, the original energy dissipation property without any restrictions on the time step can be proved because the stabilizers only depend on the Butcher notation of IMEX-RK. Also a simple framework that can determine whether an IMEX-RK method is feasible to preserve the original energy dissipation property or not is established. However, this result depends the Lipschitz condition for the nonlinear term. This relatively strict constraint motivates us to explore alternative approaches that circumvent the need for the Lipschitz condition while still guaranteeing provable unconditional energy dissipation. 

In this work, inspired by \cite{fu2024energydiminishingimplicitexplicitrungekutta}, we construct a class of linear and arbitrary high-order IMEX-RK schemes with stabilizers for PFC model. The proposed scheme can be demonstrated to achieve unconditional energy dissipation without imposing the Lipschitz condition. Our primary contributions are summarized as follows:
\begin{itemize}
	\item[$\bullet$] To circumvent the necessity of imposing the Lipschitz condition on the nonlinear term, we initially establish Lemma \ref{lemma3.1}, which demonstrates that the uniform boundedness can be achieved through energy dissipation argument. This approach leverages the existence of a sixth-order dissipation term to ensure the desired boundedness.
	
	\item[$\bullet$] We propose an auxiliary problem by truncating the nonlinear term to exhibit quadratic growth beyond the maximum norm of the numerical solutions, denoted by $M$. This approach enables a straightforward proof of unconditional energy dissipation and uniform boundedness. Then the equivalence between the original and auxiliary formulations has been rigorously established by using the Cauchy interlacing theorem. 
	
	\item[$\bullet$] The proposed scheme can be demonstrated to achieve unconditional energy dissipation, which is an essential improvement comparing with the unifrom-in-time energy stability with conditional time step $O(1)$ as reported in \cite{li2024globalintimeenergystabilityanalysis}. Furthermore, an error estimate in the discrete $L^{\infty}$ norm  is derived by leveraging the uniform boundedness of the numerical solution and Sobolev embedding theorem.
\end{itemize}
Specifically, to ensure the uniform boundedness of the numerical solutions, particularly at the intermediate stages of the IMEX-RK scheme, we verify that the chosen stabilizers are also effective for these intermediate functions as well. This verification relies on eigenvalue interlacing properties, specifically the Cauchy interlacing theorem, which relates symmetric matrices to their principal submatrices. These interlacing relationships establish that the maximum norm of the numerical solutions remains bounded by the constant $M$. Then the truncation of the nonlinear term does not compromise the fundamental properties of the numerical solutions.
Consequently, the equivalence between the original and auxiliary formulations has been rigorously established, thereby removing the requirement for the global Lipschitz continuity of the nonlinear term. Our analysis is based on the high-order temporal discretization in this work, the framework to achieve unconditional uniform boundedness,  followed by deriving $L^{\infty}$-error estimates using energy methods, is broadly feasible to match with any spatial discretization methods, such as finite element,  finite difference and spectral methods.

This paper is organized as follows. In Section 2, we introduce some preliminaries concerning splitting techniques and IMEX-RK schemes. We establish the energy dissipation and uniform boundedness in Section 3. And we give the $L^{\infty}$-convergence for the constructed scheme in section 4. Section 5 offers several numerical examples. Some concluding remarks are given in the final section.

\section{Energy preserving IMEX-RK methods for PFC model with stabilization technique}
In this section, we develop a linear, high-order, unconditionally energy stable scheme by using IMEX-RK methods to solve PFC equation \eqref{eq1.1} with stabilizers. And we will also show that the constructed scheme is unconditionally energy dissipative with appropriate stabilizers.
\subsection{A stabilization technique}\label{subsection2.1}
First, we introduce a stabilizer into the linear part of $\mu$ in \eqref{eq1.2}, so we derive the following equivalent equation
\begin{align}\label{eq2.1}
	\phi_t =\Delta \left({\left(\Delta +I\right)}^2 \phi+a\phi+\phi^3 -\left(\varepsilon +a\right)\phi\right),
\end{align}
where $a \in \mathbb{R}$ is a given constant and its range will be presented in convergence analysis. So the nonlinear function $f$ and energy functional $\mathcal{E}\left(\phi\right)$ also need to be transformed into follows
\begin{gather}
	f\left(\phi\right)=\phi^3 -\left(\varepsilon +a\right)\phi,~~F'=f,\label{eq2.2}\\
	\mathcal{E}\left(\phi\right)=\int_{\Omega } \left({\frac{1}{2}\phi\left(\Delta +I\right)^2} \phi+\frac{a}{2}\phi^2 +F\left(\phi\right)\right)\mathrm{d}\mathrm{x}.\label{eq2.3}
\end{gather}
For simplicity, we write the linear operator of energy \eqref{eq2.3} as
\begin{align}
	P_a ={\left(\Delta +I\right)}^2+aI.
	\nonumber
\end{align}

Consider the splitting of the energy functional $\mathcal{E}\left(\phi\right)=\mathcal{E}_l \left(\phi\right)-\mathcal{E}_n \left(\phi\right)$ with
\begin{gather}\label{eq2.4}
	\mathcal{E}_l \left(\phi\right)=\int_{\Omega } \frac{1}{2}\left(\phi\left(\Delta +I\right)^2 \phi+a\phi^2\right)+\frac{\alpha }{2}\left\lbrack {\phi\left(\Delta +I\right)}^2 \phi+a\phi^2 \right\rbrack +\frac{\beta }{2}\phi^2\; \mathrm{dx},\\
	\mathcal{E}_n \left(\phi\right)=\int_{\Omega } -F\left(\phi\right)+\frac{\alpha }{2}\left\lbrack {\phi\left(\Delta +I\right)}^2 \phi+a\phi^2 \right\rbrack +\frac{\beta }{2}\phi^2\;\mathrm{dx},
\end{gather}
where $\alpha, \beta\in\mathbb{R}$ are stabilizers and their ranges will be presented in energy decreasing analysis. Now based on splitting of energy, we obtain the following equivalent equation again
\begin{align}\label{eq2.6}
	\phi_t =\mathcal{L}\phi+N\left(\phi\right),
\end{align}
where
\begin{gather}
	\mathcal{L}=\Delta \left(\left(1+\alpha \right)P_a +\beta I\right),\nonumber\\
	N\left(\phi\right)=-\Delta \left(-f\left(\phi\right)+\alpha P_a\phi+\beta \phi\right),
	\nonumber
\end{gather}
\begin{remark}
	The stabilizer $a\phi$ in \eqref{eq2.1} is essential to derive the $L^{\infty}$-convergence in error estimate, since by using this stabilizer with any $a>0$, the terms
	$\left\|e_j^n \right\|_{L^{\infty}} $ and $\left\|\nabla e^n_j \right\|_{L^2}$ can be governed by $\left(e_j^n ,P_a e_j^n \right)$, as stated in \eqref{eq4.8}-\eqref{eq4.10}, then we can derive the uniform boundedness of numerical solution. On the other hand, $\alpha, \beta$ in \eqref{eq2.4} are selected to ensure the unconditional energy dissipation. Due to their different roles, we illustrate them respectively.
\end{remark}

\subsection{Implicit-Explicit Runge--Kutta method}
For the linear term $\mathcal{L}\phi$ in \eqref{eq2.6}, we use an $s$-stage diagonally implicit Runge--Kutta (DIRK) method with coefficient matrix $A={\left(a_{ij} \right)}_{s\times s} \in {\mathbb{R}}^{s\times s}$, $c,b \in {\mathbb{R}}^s$ as their butcher table. For the nonlinear term $N\left(\phi\right)$, we consider an $s$-stage explicit method with coefficient matrix $\hat{A}={\left(\hat{a}_{ij} \right)}_{s\times s} \in {\mathbb{R}}^{s\times s}$, $\hat{c},\hat{b} \in {\mathbb{R}}^s$. Then we can construct the linear implicit method for the PFC model. Furthermore, we choose IMEX-RK method with following conditions
\begin{itemize}
	\item[$\bullet$]$c_i=\sum_{j=1}^s a_{ij}=\hat{c}_i=\sum_{j=1}^s {\hat{a} }_{ij}$.
	
	\item[$\bullet$]$b_j=a_{sj}$, $\hat{b}_j=\hat{a}_{sj}$, $j=1,...,s$, implying that the implicit method is stiffly accurate.
	
	\item[$\bullet$]Matrix $\hat{A}$ is invertible.
\end{itemize}

To emphasize the relationship between implicit and explicit Runge--Kutta methods, we incorporate zeros into their Butcher tables. Thus, the IMEX-RK can be determined by the following Butcher notation
\begin{align}\label{eq2.7}
	\begin{array}{c|ccccc}
		0 & 0 & 0 & ... & ... & 0 \\
		c_1 & 0 & a_{11} & 0 & ... & 0 \\
		c_2 & 0 & a_{21} & a_{22} & ... & 0 \\
		... & 0 & ... & ... & ...& ... \\
		c_s & 0 & a_{s1} & a_{s2} & ... & a_{ss} \\
		\hline
		& 0 & b_1 & b_2 & ... & b_s
	\end{array}
	\qquad
	\begin{array}{c|ccccc}
		0 & 0 & 0 & 0 & ... & 0 \\
		\hat{c}_1 & \hat{a}_{11} & 0 & 0 & ... & 0 \\
		\hat{c}_2 & \hat{a}_{21} & \hat{a}_{22} & 0 & ... & 0 \\
		... & ... & ... & ... & ... & 0 \\
		\hat{c}_s & \hat{a}_{s1} & \hat{a}_{s2} & ... & \hat{a}_{ss} & 0 \\
		\hline
		& \hat{b}_1 & \hat{b}_2 & ... & \hat{b}_s & 0
	\end{array}
\end{align}

Applying the IMEX-RK method \eqref{eq2.7} to model \eqref{eq2.6}, we derive the following system (solving $\phi_{n+1}$ form $\phi_n$)
\begin{align}\label{eq2.8}
	\left\lbrace \begin{array}{ll}
		u^n_0 =\phi^n, & \\
		u^n_i =u^n_0 +\tau\left(\sum_{j=1}^i a_{ij} {Lu}^n_j +\sum_{j=1}^i \hat{a}_{ij} N\left(u^n_{j-1} \right)\right), & 1\le i\le s,\\
		\phi^{n+1} =u^n_s. & 
	\end{array}\right.
\end{align}

\section{Energy dissipation and uniform boundedness}
As mentioned before, benefited from the stabilization technique, Fu et al. \cite{fu2024energydiminishingimplicitexplicitrungekutta} constructed a class of IMEX-RK methods that are the first provably high-order and linearly unconditionally energy-stable methods for gradient flows with Lipschitz assumption.  However, the Lipschitz continuity condition on the nonlinear term is often a restrictive assumption in many analytical framework.  Therefore, developing approaches focusing on the $H^2$ term in energy functional of PFC model to circumvent this constraint could offer significant theoretical and practical advantages.

In this section, we establish the relationship between energy dissipation law and the uniform boundedness of numerical solution. We demonstrate these properties by replacing the PFC model with an auxiliary problem that intrinsically features a Lipschitz continuous nonlinearity. The core idea is that if we can prove the auxiliary problem and the original PFC model yield identical numerical solution, then these solutions inherently share the same qualitative properties, including energy dissipation and boundedness.

We first present a lemma demonstrating that the energy dissipation ensures the uniform boundedness of the solution. 
This lemma is crucial for demonstrating that, provided the numerical solution at any given time step preserves the energy dissipation property relative to the initial data $\phi_0$, the corresponding $L^{\infty}$ norm can subsequently be controlled by a bound $M(\phi_0,\Omega)$. This is particularly significant because the mass conservation has already been inherently enforced.

\medskip
\begin{lemma}\label{lemma3.1}
	Suppose that $u$, $v:\bar{\Omega}\rightarrow\mathbb{R}$ are functions satisfying $\int_{\Omega}u\;dx=\int_{\Omega}v\;dx$ and $\mathcal{E}(u)\leq\mathcal{E}(v)$, equipped with either the periodic boundary condition or the homogeneous Neumann boundary condition. Then $\left\|u \right\|_{L^\infty}$ can be governed only by $v$ and $\Omega$.
\end{lemma}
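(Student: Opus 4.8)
The plan is to show that the modified energy $\mathcal{E}$ of \eqref{eq2.3} coercively controls the $H^2(\Omega)$-norm of its argument, up to an additive constant depending only on $\Omega$, and then to invoke the Sobolev embedding $H^2(\Omega)\hookrightarrow L^\infty(\Omega)$, which is available because $d\in\{2,3\}$. Since $\mathcal{E}(u)\le\mathcal{E}(v)$ transfers any lower bound on $\mathcal{E}(u)$ into an upper bound expressed through $\mathcal{E}(v)$, this will give $\|u\|_{L^\infty}\le M(v,\Omega)$.

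First I would put the energy into a transparent form. Using \eqref{eq2.2}, so that $F(\phi)=\tfrac14\phi^4-\tfrac{\varepsilon+a}{2}\phi^2$, the $a$-dependent quadratic terms in \eqref{eq2.3} cancel, and two integrations by parts (all boundary terms vanishing under the periodic or the homogeneous Neumann conditions) give
\[
\mathcal{E}(\phi)=\frac12\|\Delta\phi\|_{L^2}^2-\|\nabla\phi\|_{L^2}^2+\frac{1-\varepsilon}{2}\|\phi\|_{L^2}^2+\frac14\|\phi\|_{L^4}^4 .
\]
The only term with an unfavorable sign is $-\|\nabla\phi\|_{L^2}^2$, which I would absorb by the interpolation inequality $\|\nabla\phi\|_{L^2}^2\le\|\Delta\phi\|_{L^2}\|\phi\|_{L^2}\le\tfrac14\|\Delta\phi\|_{L^2}^2+\|\phi\|_{L^2}^2$, and then bound the residual $\|\phi\|_{L^2}^2$ by Hölder and Young, $\|\phi\|_{L^2}^2\le|\Omega|^{1/2}\|\phi\|_{L^4}^2\le\tfrac18\|\phi\|_{L^4}^4+C|\Omega|$. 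This yields, with $C$ an absolute constant,
\[
\mathcal{E}(\phi)\ \ge\ \frac14\|\Delta\phi\|_{L^2}^2+\frac18\|\phi\|_{L^4}^4-C|\Omega| .
\]

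Applying this lower bound to $u$ and using $\mathcal{E}(u)\le\mathcal{E}(v)$ gives $\tfrac14\|\Delta u\|_{L^2}^2+\tfrac18\|u\|_{L^4}^4\le\mathcal{E}(v)+C|\Omega|$, so $\|\Delta u\|_{L^2}$ and $\|u\|_{L^4}$ — hence also $\|u\|_{L^2}$ by Hölder, and then $\|\nabla u\|_{L^2}$ by the interpolation $\|\nabla u\|_{L^2}^2\le\|\Delta u\|_{L^2}\|u\|_{L^2}$ — are all bounded by quantities depending only on $\mathcal{E}(v)$ (equivalently, on $v$) and on $|\Omega|$. Since for periodic or Neumann conditions $\|u\|_{H^2(\Omega)}$ is controlled by $\|u\|_{L^2}+\|\Delta u\|_{L^2}$, the $H^2$-norm of $u$ is bounded in terms of $v$ and $\Omega$ alone; the continuous embedding $H^2(\Omega)\hookrightarrow L^\infty(\Omega)$ for $d\le 3$ then gives $\|u\|_{L^\infty}\le C_\Omega\|u\|_{H^2(\Omega)}=:M(v,\Omega)$. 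The constraint $\int_\Omega u\,dx=\int_\Omega v\,dx$ is not needed for this coarse estimate (it may be used, via Poincar\'e--Wirtinger applied to $u-\bar u$, to sharpen the constant), and I would mention it only in passing.

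The main obstacle — in fact essentially the only nontrivial point — is the indefinite term $-\|\nabla\phi\|_{L^2}^2$: one must verify that, after absorbing it, every remaining constant is absolute or depends solely on $\Omega$, so that the final bound on $u$ is genuinely a function of $\mathcal{E}(v)$ and $\Omega$ with no hidden dependence on $u$ itself. The interpolation–Young bookkeeping above is arranged precisely to ensure this. A secondary, routine matter is the legitimacy of the integrations by parts, which is automatic once the relevant $H^2$-norms are assumed finite, consistent with the hypothesis that $\mathcal{E}(v)$ (and hence $\mathcal{E}(u)$) is well defined.
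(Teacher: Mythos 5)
Your proposal is correct and follows the same overall strategy as the paper: expand $\mathcal{E}$ into $\frac14\|\phi\|_{L^4}^4+\frac{1-\varepsilon}{2}\|\phi\|_{L^2}^2-\|\nabla\phi\|_{L^2}^2+\frac12\|\Delta\phi\|_{L^2}^2$, absorb the indefinite gradient term by the interpolation $\|\nabla\phi\|_{L^2}^2\le\|\phi\|_{L^2}\|\Delta\phi\|_{L^2}$, control the resulting negative $\|\phi\|_{L^2}^2$ term by Young's inequality against the quartic term, and conclude via the Sobolev embedding $H^2(\Omega)\hookrightarrow L^\infty(\Omega)$ for $d\le 3$. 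The one genuine point of divergence is the final bookkeeping for $\|u\|_{L^2}$ and $\|\nabla u\|_{L^2}$: the paper discards the quartic term after the coercivity step and instead invokes the Poincar\'e--Wirtinger inequality together with the hypothesis $\int_\Omega u\,dx=\int_\Omega v\,dx$ (so that $\bar u=\bar v$ enters the constant $M_2$), whereas you retain the $L^4$ control and deduce $\|u\|_{L^2}\le|\Omega|^{1/4}\|u\|_{L^4}$ by H\"older, which makes the equal-mass hypothesis dispensable for this lemma. Your observation is sound, and it yields a marginally more general statement; the paper's route is what makes the same machinery reusable in the error analysis (see \eqref{eq4.8}--\eqref{eq4.9}), where the quantities $e_j^n$ have zero mean but no useful $L^4$ control from an energy, so Poincar\'e--Wirtinger is the step that is actually recycled there. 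Either way the resulting bound $M(v,\Omega)$ depends only on $\mathcal{E}(v)$, $\bar v$ (or $\|v\|_{L^4}$) and $\Omega$, as required.
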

\begin{proof}
	Above all, we introduce some useful inequalities:
	\begin{gather}
		{\left\|\nabla u\right\|}_{L^2 }^2 \le p{\left\|u\right\|}_{L^2 }^2 +q{\left\|\Delta u\right\|}_{L^2 }^2,\label{eq3.1}\\
		{\left\|u\right\|}_{L^{\infty } } \le C_{\Omega}\sqrt{{\left\|u\right\|}_{L^2 }^2 +{\left\|\nabla u\right\|}_{L^2 }^2 +{\left\|\Delta u\right\|}_{L^2 }^2 },\label{eq3.2}
	\end{gather}
	and Poincar\'{e}-Wirtinger inequality:
	\begin{align}\label{eq3.3}
		\left\|u-\bar{u} \right\|_{L^2} \leq C_p\left\|\nabla u \right\|_{L^2},
	\end{align}
	where $C_{\Omega}$ and $C_p$ are constants that only dependent $\Omega$, $\bar{u}=\frac{1}{\left|\Omega \right| }\int_{\Omega}u\;dx$, and $2\sqrt{pq}=1$.

	Now we first prove that $\left\|u \right\|_{L^\infty}$ can be governed by its energy, domain $\Omega$ and the mean of itself on $\Omega$. We start with energy $\mathcal{E}(u)$:
	\begin{align}\label{eq3.4}
		\mathcal{E}\left(u\right)&=\frac{1}{4}(u^4,1)+\frac{1-\epsilon}{2}\left\|u \right\|^2_{L^2}-\left\|\nabla u \right\|^2_{L^2}+\frac{1}{2}\left\|\Delta u \right\|^2_{L^2}\\
		&\geq \frac{1}{4}(u^4,1)-\frac{1+\epsilon}{2}\left\|u \right\|^2_{L^2}+\frac{1}{4}\left\|\Delta u \right\|^2_{L^2}\notag\\
		&\geq \left( \frac{1}{4}-\frac{(1+\epsilon)^2}{16}\right) (u^4,1)-\left|\Omega \right|+\frac{1}{4}\left\|\Delta u \right\|^2_{L^2},\notag
	\end{align}
	where $\left|\Omega \right|=\int_{\Omega} 1\;dx $. Considering that $\epsilon\in(0,1)$, we have
	\begin{align}\label{eq3.5}
		\left\|\Delta u \right\|_{L^2}\leq 2\left(\mathcal{E}(u)+\left|\Omega \right| \right)^{\frac{1}{2}}=M_1\left(u,\Omega \right) .
	\end{align}
	Here we also need to obtain a bound of $\left\| u \right\|_{L^2}$ and $\left\|\nabla u \right\|_{L^2}$ if we want to use inequality \eqref{eq3.2}. Obtained by the Poincar\'{e}-Wirtinger Inequality, we have
	\begin{align}\label{eq3.6}
		\left\|u \right\|_{L^2} & \leq \left\|u-\bar{u}\right\|_{L^2} + \left\|\bar{u}\right\|_{L^2}\\
		&\leq  C_p\left\|\nabla u \right\|_{L^2} + \left\|\bar{u}\right\|_{L^2}.\notag
	\end{align}

	Utilizing inequality \eqref{eq3.1}, we can derive the following inequality
	\begin{align}\label{eq3.7}
		{\left\|\nabla u\right\|}_{L^2 } \le \frac{\epsilon}{2}{\left\|u\right\|}_{L^2 } +\frac{1}{2\epsilon}{\left\|\Delta u\right\|}_{L^2 }.
	\end{align}
	Selecting suitable $\epsilon$, satisfying that $\frac{\epsilon}{2}C_p \leq \frac{1}{2}$, we can obtain an estimate on $\left\|u \right\|_{L^2}$:
	\begin{align}\label{eq3.8}
		\left\|u \right\|_{L^2} \leq \frac{C_p}{\epsilon} \left\|\Delta u\right\|_{L^2 }+2\left\|\bar{u}\right\|_{L^2}=M_2\left(u,\Omega \right) ,
	\end{align}
	and the estimate on $\left\|\nabla u\right\|_{L^2}$ can be inferred form \eqref{eq3.8}:
	\begin{align}\label{eq3.9}
		{\left\|\nabla u\right\|}_{L^2 } \leq \epsilon\left\|\bar{u}\right\|_{L^2} +\left( \frac{C_p}{2}+\frac{1}{2\epsilon}\right) {\left\|\Delta u\right\|}_{L^2 }=M_3\left(u,\Omega \right) .
	\end{align}

	It is thus clear that $\left\|u\right\|_{L^{\infty}}\leq M(u,\Omega)$, where $M(u,\Omega):=C_{\Omega}\sqrt{M_1^2+M_2^2+M_3^2}$ for simplicity. Following the facts that $\int_{\Omega}u\;dx=\int_{\Omega}v\;dx$ and $\mathcal{E}(u)\leq\mathcal{E}(v)$, we have $M(u,\Omega)\leq M(v,\Omega)$. So $\left\|u \right\|_{L^\infty}$ can be governed by $v$ and $\Omega$.
\end{proof}

\textbf{\textit{The auxiliary problem}}:  let $M_0=M(\phi_0,\Omega)$, then the auxiliary problem with the help of lemma \ref{lemma3.1} can be proposed in the following form:
\begin{align}\label{eq3.10}
	\left\lbrace \begin{array}{ll}
		\tilde{u}^n_0 =\tilde{\phi}^n, & \\
		\tilde{u}^n_i =\tilde{u}^n_0 +\tau\left(\sum_{j=1}^i a_{ij} {\mathcal{L}\tilde{u}}^n_j +\sum_{j=1}^i \hat{a}_{ij} \tilde{N}\left(\tilde{u}^n_{j-1} \right)\right), & 1\le i\le s, \\
		\tilde{\phi}^{n+1} =\tilde{u}^n_s, & 
	\end{array}\right.
\end{align}
where 
\begin{align}
	\tilde{\phi}^0=\phi^0,\quad\tilde{N}\left(\phi\right)&=-\Delta \left(-\tilde{f}\left(\phi\right)+\alpha P_a\phi+\beta \phi\right),
\end{align}
and
\begin{align}\label{eq3.12}
	\tilde{f} \left(\phi \right)= \left\lbrace \begin{array}{ll}
		\left(3M_0^2 -\left(\epsilon +a\right)\right)\phi -2M_0^3 , & \phi >M_0,\\
		\phi^3 -\left(\epsilon +a\right)\phi , & \phi \in \left\lbrack -M_0,M_0\right\rbrack\\
		\left(3M_0^2 -\left(\epsilon +a\right)\right)\phi +2M_0^3 , & \phi <-M_0\ldotp 
	\end{array}\right..
\end{align}

Actually, the construction of \eqref{eq3.12} can be regarded as a technique that truncates the function $F$ to exhibit quadratic growth at infinity, then it is trivial for $\tilde{f}$ to be Lipschitz bounded with Lipschitz constant $L$. More precisely, inspired by \cite{shen2010numerical},  we replace $F(\phi)=\frac{1}{4}\phi^4-\frac{\epsilon+a}{2}\phi^2$ by:
\begin{align}\label{eq3.13}
	\tilde{F} \left(\phi \right)=\left\lbrace \begin{array}{ll}
		\frac{3M_0^2 -\left(\epsilon +a\right)}{2}\phi^2 -2M_0^3 \phi +\frac{3}{4}M_0^4 , &\ \phi >M_0,\\
		\frac{1}{4}\phi^4 -\frac{\left(\epsilon +a\right)}{2}\phi^2 , &\ \phi \in \left\lbrack -M_0,M_0\right\rbrack \\
		\frac{3M_0^2 -\left(\epsilon +a\right)}{2}\phi^2 +2M_0^3 \phi +\frac{3}{4}M_0^4 , &\ \phi <-M_0\ldotp 
	\end{array}\right.,
\end{align}
and declare that $\tilde{f} \left(\phi \right)=\tilde{F}' \left(\phi \right)$.

\medskip
\begin{lemma}
	The auxiliary problem \eqref{eq3.10} and the original system \eqref{eq2.8} share the same solution provided that
	$\mathcal{E}\left(\tilde{u}^n_i \right) \leq\mathcal{E}(\tilde{\phi}^0 )$ for every $i$ and $n$, i.e., the solutions of \eqref{eq3.10} and \eqref{eq2.8} at each time step, including all intermediate solutions, are energy stable with respect to the initial condition.
\end{lemma}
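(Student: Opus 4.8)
The idea is that the energy-dissipation hypothesis forces every auxiliary iterate to remain in the sup-norm ball of radius $M_0$, on which the truncated nonlinearity $\tilde f$ coincides with $f$; hence the cut-off in \eqref{eq3.12} is never triggered and scheme \eqref{eq3.10} collapses onto \eqref{eq2.8}. Concretely, I would run a double induction — an outer one on the time level $n$ and, within each level, an inner one on the stage index $i$ — propagating the two invariants $\tilde u^n_i = u^n_i$ and $\|\tilde u^n_i\|_{L^\infty}\le M_0$.

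First I would record mass conservation and the resulting $L^\infty$ bound. Both $\mathcal{L}w$ and $\tilde N(w)$ are in divergence form, so integrating the $i$-th line of \eqref{eq3.10} over $\Omega$ and using the periodic or homogeneous Neumann boundary conditions annihilates the right-hand side; thus $\int_\Omega \tilde u^n_i\,\mathrm{dx} = \int_\Omega \tilde u^n_0\,\mathrm{dx} = \int_\Omega \tilde\phi^n\,\mathrm{dx}$, and a trivial induction on $n$ gives $\int_\Omega \tilde u^n_i\,\mathrm{dx} = \int_\Omega \phi^0\,\mathrm{dx}$ for all $i,n$. Combined with the hypothesis $\mathcal{E}(\tilde u^n_i)\le\mathcal{E}(\tilde\phi^0) = \mathcal{E}(\phi^0)$, Lemma \ref{lemma3.1} applied with $u = \tilde u^n_i$ and $v = \phi^0$ yields $\|\tilde u^n_i\|_{L^\infty}\le M(\phi^0,\Omega) = M_0$ for all $i,n$. (Taking $u = v = \phi^0$ also gives $\|\phi^0\|_{L^\infty}\le M_0$, so the truncation is already inert at the initial datum.)

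Now the induction itself. For $n = 0$, $\tilde\phi^0 = \phi^0$; assume $\tilde\phi^n = \phi^n$, so $\tilde u^n_0 = u^n_0$ with $\|\tilde u^n_0\|_{L^\infty}\le M_0$. Suppose $\tilde u^n_k = u^n_k$ for $0\le k\le i-1$. Since $\|\tilde u^n_k\|_{L^\infty}\le M_0$ and $\tilde f\equiv f$ on $[-M_0,M_0]$ by \eqref{eq3.12}, we get $\tilde f(\tilde u^n_k) = f(u^n_k)$ pointwise, hence $\tilde N(\tilde u^n_{j-1}) = N(u^n_{j-1})$ for $1\le j\le i$. Collecting the $a_{ii}$-term on the left, the $i$-th stage of \eqref{eq3.10} reads $(I-\tau a_{ii}\mathcal{L})\tilde u^n_i = \tilde u^n_0 + \tau\bigl(\sum_{j=1}^{i-1}a_{ij}\mathcal{L}\tilde u^n_j + \sum_{j=1}^{i}\hat a_{ij}\tilde N(\tilde u^n_{j-1})\bigr)$, and by the induction hypothesis its right-hand side equals that of the corresponding stage of \eqref{eq2.8}. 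Under the structural assumptions on the Butcher tableau ($a_{ii}\ge 0$) and on the stabilizers (which make $(1+\alpha)P_a + \beta I$ nonnegative), the operator $I - \tau a_{ii}\mathcal{L}$ is an invertible elliptic operator with periodic/Neumann conditions; therefore the $i$-th stage of \eqref{eq2.8} is uniquely solvable and $\tilde u^n_i = u^n_i$, with $\|u^n_i\|_{L^\infty} = \|\tilde u^n_i\|_{L^\infty}\le M_0$. This closes the inner induction, and $i = s$ gives $\tilde\phi^{n+1} = \tilde u^n_s = u^n_s = \phi^{n+1}$, closing the outer one. (In passing, this also establishes the well-posedness of \eqref{eq2.8}.)

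The main obstacle is not the algebra of matching the two schemes but the requirement that the energy estimate be available at \emph{every intermediate stage}, not merely at the updates $\tilde\phi^{n+1}$: it is precisely $\mathcal{E}(\tilde u^n_i)\le\mathcal{E}(\tilde\phi^0)$, fed into Lemma \ref{lemma3.1}, that pins $\|\tilde u^n_i\|_{L^\infty}\le M_0$ and thereby neutralizes the truncation inside the $i$-th stage equation. This is exactly why the subsequent energy analysis must certify that the chosen stabilizers $\alpha,\beta$ render energy dissipative not only the step-to-step map but each Runge--Kutta stage — the point at which the Cauchy interlacing argument announced in the introduction enters. A secondary, routine point is the well-posedness of each stage solve, i.e.\ the invertibility of $I - \tau a_{ii}\mathcal{L}$, which is where the sign conditions on $a,\alpha,\beta$ are used.
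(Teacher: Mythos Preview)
Your proof is correct and follows essentially the same approach as the paper: invoke the energy hypothesis together with Lemma~\ref{lemma3.1} to obtain $\|\tilde u^n_i\|_{L^\infty}\le M_0$, whereupon the truncation in \eqref{eq3.12} is never active and the two schemes coincide. The paper's own proof is a two-line sketch that omits both the mass-conservation check (needed to feed Lemma~\ref{lemma3.1}) and the stage-by-stage induction you carry out; your version simply fills in these details, including the well-posedness of each implicit stage.
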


\begin{proof}
	It is clear that, system \eqref{eq3.10} and \eqref{eq2.8} have the same solution when $\left\| \tilde{u}^n_i\right\|_{L^{\infty}}\leq M_0$, for every $i$ and $n$. According to lemma \ref{lemma3.1}, we can obtain the uniform boundedness $\left\| \tilde{u}^n_i\right\|_{L^{\infty}}\leq M_0$ if $\mathcal{E}\left(\tilde{u}^n_i \right) \leq\mathcal{E}(\tilde{\phi}^0 )$, which leads to the desired result.
\end{proof}

Using the above lemma, we can deduce that in order to establish the equivalence between the auxiliary problem \eqref{eq3.10} and the original system \eqref{eq2.8}, it is essential to first demonstrate the uniform boundedness of the intermediate solutions lying between $\tilde{u}^n_0$ and $\tilde{u}^n_s$. To facilitate the proofs of energy dissipation and uniform boundedness, we present the following lemma. Inspired by \cite{Cauahyinterlace}, we recall the Cauchy interlace theorem, which elucidates the relationship between the eigenvalues of a real symmetric matrix and those of its principal submatrices. Specifically, it states that the eigenvalues of any principal submatrix are interlaced with those of the original matrix.

\medskip
\begin{lemma}[Cauchy's interlace theorem]\label{lemma3.2}
	Let $A\in \mathbb{R}^{n\times n}$ be a real symmetric matrix with eigenvalues $\lambda_1 \le \lambda_2 \le \cdots \le \lambda_n$, and let $B$ be an $\left(n-1 \right)\times \left(n-1 \right)  $ principal submatrix of $A$ with eigenvalues $\mu_1 \le \mu_2 \le \cdots \le \mu_{n-1}$.\\
	Then
	\begin{align*}
		\lambda_1 \le \mu_1 \le \lambda_2 \le \mu_2 \le \cdots \le \lambda_{n-1} \le \mu_{n-1} \le \lambda_n.
	\end{align*}
	In general, if $B$ be an $m\times m$ principal submatrix of $A$, and $\mu_1 \le \mu_2 \le \cdots \le \mu_{m}$ its eigenvalues, then 
	\begin{align*}
		\lambda_k \le \mu_k \le \lambda_{k+n-m},
	\end{align*}
	for any $k=1,2,...,m$.
\end{lemma}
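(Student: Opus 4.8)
The plan is to derive the two inequalities $\lambda_k\le\mu_k$ and $\mu_k\le\lambda_{k+n-m}$ directly from the Courant--Fischer min--max characterization of the eigenvalues of a real symmetric matrix, using the elementary fact that the quadratic form associated with a principal submatrix is precisely the restriction of the quadratic form of the full matrix to a coordinate subspace. First I would note that conjugating $A$ by a permutation matrix alters neither the eigenvalues of $A$ nor those of a prescribed principal submatrix, so without loss of generality we may take $B$ to be the leading $m\times m$ block of $A$. Identifying $\mathbb{R}^m$ with the subspace $W=\{x\in\mathbb{R}^n:\,x_{m+1}=\cdots=x_n=0\}$ and writing $\hat{x}\in W$ for the zero-extension of $x\in\mathbb{R}^m$, one has $\hat{x}^{\top}A\hat{x}=x^{\top}Bx$ and $\|\hat{x}\|=\|x\|$, so the Rayleigh quotients of $B$ coincide with those of $A$ restricted to $W$.

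Next I would invoke the two equivalent forms of the Courant--Fischer theorem: for a symmetric $N\times N$ matrix $M$ with ordered eigenvalues $\theta_1\le\cdots\le\theta_N$, $\theta_k=\min_{\dim V=k}\max_{0\ne x\in V}\tfrac{x^{\top}Mx}{x^{\top}x}=\max_{\dim V=N-k+1}\min_{0\ne x\in V}\tfrac{x^{\top}Mx}{x^{\top}x}$, where $V$ ranges over subspaces of $\mathbb{R}^N$. For the lower bound, apply the min--max form to $B$: $\mu_k$ is the minimum, over $k$-dimensional subspaces $V\subseteq W$, of the maximal Rayleigh quotient on $V$; since such subspaces form a subfamily of all $k$-dimensional subspaces of $\mathbb{R}^n$, this minimum can only be larger, so $\mu_k\ge\lambda_k$. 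For the upper bound, apply the max--min form to $B$: $\mu_k$ is the maximum, over $(m-k+1)$-dimensional subspaces $V\subseteq W$, of the minimal Rayleigh quotient on $V$; since $n-(k+n-m)+1=m-k+1$, the max--min form for $A$ at index $k+n-m$ takes the same kind of maximum but over \emph{all} $(m-k+1)$-dimensional subspaces of $\mathbb{R}^n$, a larger family, hence $\mu_k\le\lambda_{k+n-m}$. Combining gives $\lambda_k\le\mu_k\le\lambda_{k+n-m}$ for $k=1,\dots,m$; specializing to $m=n-1$ yields $\lambda_k\le\mu_k\le\lambda_{k+1}$, which chained over $k=1,\dots,n-1$ is exactly the interlacing chain $\lambda_1\le\mu_1\le\lambda_2\le\cdots\le\mu_{n-1}\le\lambda_n$.

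This is a classical fact, so I do not anticipate a genuine difficulty; the one point requiring care is matching the dimension parameters in the two forms of Courant--Fischer so that the $(m-k+1)$-dimensional subspaces appearing in the max--min expression for $\mu_k$ line up with those in the max--min expression for $\lambda_{k+n-m}$ --- this is precisely the source of the shift $n-m$. An alternative that avoids the max--min form is to establish the case $m=n-1$ first and then iterate over successive single row/column deletions $A=B^{(0)}\supset B^{(1)}\supset\cdots\supset B^{(n-m)}=B$, with each step contributing exactly one unit of index shift to the upper bound while preserving the lower bound; there the only thing to monitor is the bookkeeping of eigenvalue indices as the matrix size decreases.
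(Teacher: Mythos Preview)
Your argument is correct and, like the paper's, rests on the Courant--Fischer characterization together with the observation that the Rayleigh quotient of $B$ at $x$ equals the Rayleigh quotient of $A$ at the zero-extension $\hat{x}$. The tactical difference is that the paper does not invoke the full min--max over \emph{all} subspaces and then appeal to monotonicity under subspace-family inclusion; instead it works with the \emph{specific} eigenspaces $V=\operatorname{span}\{x_k,\dots,x_n\}$ of $A$ and $W=\operatorname{span}\{y_1,\dots,y_k\}$ of $B$, uses the dimension count $\dim V+\dim U=n+1$ (with $U$ the zero-extension of $W$) to produce a single nonzero vector $u\in V\cap U$, and reads off $\lambda_k\le\mu_k$ from the Rayleigh-quotient extremality on those eigenspaces; the inequality $\mu_k\le\lambda_{k+n-m}$ is obtained symmetrically with $V=\operatorname{span}\{x_1,\dots,x_{k+n-m}\}$ and $W=\operatorname{span}\{y_k,\dots,y_m\}$. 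Your route is slightly more streamlined and avoids constructing a witness vector, at the cost of quoting both the min--max and max--min forms of Courant--Fischer; the paper's route needs only the extremal property of Rayleigh quotients on eigenspaces plus an elementary dimension-counting step, which some readers may find more self-contained.
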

\begin{proof}
	As noted in \cite{Cauahyinterlace}, proofs of this theorem typically rely on Sylvester's law of inertia and the Courant-Fischer min-max theorem, which can be somewhat intricate. To facilitate a clearer understanding, a simplified framework is presented below.
	
	We set $A= \left[  \begin{matrix}
		B & X^T \\ X & Z
	\end{matrix} \right]$, with the eigenvectors $\left\lbrace x_1,...,x_n \right\rbrace$, and the eigenvectors of $B$ are $\left\lbrace y_1,...,y_n \right\rbrace$, where $x_i$ is the eigenvector of $\lambda_i$ which is linearly independent on others, and so do $B$'s. Define the vector spaces $V=span\left\lbrace x_k,...,x_n\right\rbrace$, $W=span\left\lbrace y_1,...,y_k\right\rbrace$, $U=\left\lbrace\left( \begin{matrix}
		w \\ 0
	\end{matrix}\right)\in \mathbb{R}^n, w\in W \right\rbrace$.

	Clearly, we have $dim(V)=n-k+1$, $dim(W)=dim(U)=k$, then $V\cap U\neq \varnothing$ inasmuch as $dim(V)+dim(U)=n+1>n$. So there is $u\in V\cap U$, also the $u$ can be indicated as $u=\left( \begin{matrix}
		w \\ 0
	\end{matrix}\right)$, for some $w\in W$.

	Then we obtain $u^T Au=[w^T\;0]\left[  \begin{matrix}
		B & X^T \\ X & Z
	\end{matrix} \right]\left[  \begin{matrix}
		w \\ 0
	\end{matrix} \right]=w^T Bw$ and $\dfrac{u^T Au}{u^Tu}=\dfrac{w^T Bw}{w^Tw}$. In light of \emph{Courant-Fischer min-max Theorem}, we have
	\begin{align*}
		\lambda_k=\mathop{min}\limits_{x\in V}\frac{x^T Ax}{x^Tx},\;\mu_k=\mathop{max}\limits_{x\in W}\frac{x^T Bx}{x^Tx}.
	\end{align*}
	Therefore $\dfrac{u^T Au}{u^Tu}\geq \lambda_k$, $\dfrac{w^T Bw}{w^Tw}\leq\mu_k$, so $\lambda_k\leq u_k$.

	On the other side, we may set $V=span\left( x_1,...,x_{k+n-m}\right)$, $W=span\left( y_k,...,y_m\right)$. Hence we can use similar procedure to obtain $\lambda_{k+n-m}\geq \mu_k$.
\end{proof}

Now we present the main theorem for the energy dissipation and uniform boundedness of the auxiliary problem \eqref{eq3.10}.

\medskip
\begin{theorem}\label{the3.3}
	Define matrices $H_0$, $H_1(\beta)$, $H_2(\alpha)$, $Q$, and $\bar{M} \triangleq \frac{1}{2}\left(M+M^{\mathrm{T}} \right)$, $M[m]$ is the principal submatrix of $M$ of order $m$ for any matrix $M\in \mathbb{R}^{n\times n}$, where
	\begin{align}
		&H_0={{\hat{A} }^{-1} E}_L,\label{eq3.14}\\
		&H_1(\beta)=\beta Q-\frac{L}{2}I,\label{eq3.15}\\
		&H_2(\alpha)=\alpha Q-\frac{1}{2}E+{\hat{A} }^{-1} {AE}_L,\label{eq3.16}\\
		&Q=\left({\hat{A} }^{-1} A-I\right)E_L +I.\label{eq3.17}
	\end{align}
	In these matrices, $\alpha$, $\beta\in \mathbb{R}$ are stabilizers, $E\in \mathbb{R}^{s\times s}$ is all $1$ inside, $E_L\in \mathbb{R}^{s\times s}$ is $1$ in lower triangle and other elements are $0$, $I\in \mathbb{R}^{s\times s}$ represents the identity matrix. Furthermore, we define $\lambda_{\mathrm{min}} \left(M\right)$ is the smallest eigenvalue of matrix $M$, and the selection of Butcher notation \eqref{eq2.7} can guarantee that both $\bar{Q}$ and $\bar{H_0}$ are positive-definite. Meanwhile, if $\alpha$ and $\beta$ satisfy the following inequalities:
	\begin{equation}\label{eq3.18}
		\begin{aligned}
			&\alpha \ge \frac{1}{2\lambda_{\mathrm{min}} \left(\bar{Q} \right)}-1,\\
			&\beta \ge \frac{L}{2\lambda_{\mathrm{min}} \left(\bar{Q} \right)},
		\end{aligned}
	\end{equation}
	$\bar{H_1}(\beta)$, $\bar{H_2}(\alpha)$ are both positive-definite.
	Then the IMEX-RK methods \eqref{eq2.8} and \eqref{eq3.10} for the PFC model satisfy the following properties:
	\begin{itemize}
		\item [\romannumeral1)] The IMEX-RK method \eqref{eq3.10} satisfies the unconditional dissipation law:
		\begin{align*}
			&\quad\ \mathcal{E}\left(\tilde{\phi}^{n+1} \right)-\mathcal{E}\left(\tilde{\phi}^n \right)\leq 0.
		\end{align*}
		\item [\romannumeral2)] The IMEX-RK method \eqref{eq3.10} satisfies the uniform boundedness:
		\begin{align*}
			\left\|\tilde{u}^n_i \right\|_{L^{\infty}}\leq M_0,\quad i=0,1,...,s,\; n=0,1,...,\frac{T}{\tau}
		\end{align*}
		\item [\romannumeral3)] The IMEX-RK methods \eqref{eq2.8} and \eqref{eq3.10} share the same numerical solution.
	\end{itemize}
\end{theorem}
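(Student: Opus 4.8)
The plan is to establish the three items in the order \romannumeral1), then \romannumeral2), then \romannumeral3), since each conclusion feeds the next. The decisive structural feature I would exploit is that the auxiliary nonlinearity $\tilde{f}$ in \eqref{eq3.12} is globally Lipschitz with constant $L$, so the energy estimate for \eqref{eq3.10} uses no \emph{a priori} information on the numerical solution and is therefore genuinely unconditional. Starting from the convex splitting $\mathcal{E} = \mathcal{E}_l - \mathcal{E}_n$, so that $\mathcal{L}$ and $\tilde{N}$ are $\Delta$ applied to the variational derivatives of the convex and concave parts respectively (the latter with $\tilde{F}$ in place of $F$), I would test the stage equations of \eqref{eq3.10} against suitable combinations of the stage increments (encoded by $\hat{A}^{-1}$, which is where invertibility of $\hat{A}$ is used) and rewrite the one-step change $\mathcal{E}(\tilde{\phi}^{n+1}) - \mathcal{E}(\tilde{\phi}^{n})$ as a sum of bilinear forms. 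This sum decomposes into a manifestly non-positive leading contribution --- governed by positive definiteness of $\bar{H_0}$ together with the dissipative $H^{-1}$ structure (the operator $-\Delta$ and the positivity of $P_a$ for $a>0$) --- plus corrections: the terms arising from $\tilde{F}$ are controlled using only $|\tilde{f}'|\le L$ and produce a quadratic form with matrix $\bar{H_1}(\beta)$, while the $\alpha P_a + \beta$ stabilizers in $\tilde{N}$ produce a quadratic form with matrix $\bar{H_2}(\alpha)$. With $\bar{Q},\bar{H_0}$ positive definite by the choice of Butcher data and with \eqref{eq3.18} making $\bar{H_1}(\beta),\bar{H_2}(\alpha)$ positive definite, the correction is non-positive, which gives \romannumeral1) for $\tilde{\phi}^{n+1}$ relative to $\tilde{\phi}^{n}$.

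The same computation must also be run for the intermediate stages, and this is where Lemma \ref{lemma3.2} enters. The $i$-stage truncation of \eqref{eq3.10} is itself an admissible scheme of the same form, with Butcher data the leading principal submatrices $A[i],\hat{A}[i]$; since $A$, $\hat{A}^{-1}$ and $E_L$ are lower triangular, the leading $i\times i$ blocks of $Q,H_0,H_1(\beta),H_2(\alpha)$ are exactly the matrices built from $A[i],\hat{A}[i]$ (for instance $(\hat{A}^{-1}A)[i] = (\hat{A}[i])^{-1}A[i]$, with $E_L[i]$ again the lower-triangular all-ones matrix), and since symmetrization commutes with taking leading principal submatrices the same holds for $\bar{Q},\bar{H_0},\bar{H_1}(\beta),\bar{H_2}(\alpha)$. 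By Cauchy's interlace theorem, $\lambda_{\mathrm{min}}(\bar{Q}[i]) \ge \lambda_{\mathrm{min}}(\bar{Q})$ and any principal submatrix of a positive-definite matrix is positive definite, so the stabilizers fixed through \eqref{eq3.18} for the full $\bar{Q}$ remain admissible for every truncated scheme; hence $\mathcal{E}(\tilde{u}^n_i) \le \mathcal{E}(\tilde{u}^n_0) = \mathcal{E}(\tilde{\phi}^n)$ for all $i$ and $n$.

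Combining the two previous steps by telescoping yields $\mathcal{E}(\tilde{u}^n_i) \le \mathcal{E}(\tilde{\phi}^n) \le \cdots \le \mathcal{E}(\tilde{\phi}^0)$ for all $i,n$; moreover, integrating \eqref{eq3.10} over $\Omega$ and using that $\mathcal{L}$ and $\tilde{N}$ are in divergence form $\Delta(\cdot)$ gives the discrete mass conservation $\int_\Omega \tilde{u}^n_i\,dx = \int_\Omega \tilde{\phi}^0\,dx$ under the periodic or homogeneous Neumann boundary conditions. Lemma \ref{lemma3.1} applied with $u = \tilde{u}^n_i$ and $v = \tilde{\phi}^0$ then yields \romannumeral2), namely $\|\tilde{u}^n_i\|_{L^\infty} \le M(\tilde{\phi}^0,\Omega) = M_0$ --- this is precisely the point where the sixth-order term, i.e.\ the control of $\|\Delta u\|_{L^2}$ in \eqref{eq3.5}, substitutes for any Lipschitz hypothesis. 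Finally, $\|\tilde{u}^n_i\|_{L^\infty}\le M_0$ forces $\tilde{f}(\tilde{u}^n_i) = f(\tilde{u}^n_i)$ by \eqref{eq3.12}, hence $\tilde{N}(\tilde{u}^n_{j-1}) = N(\tilde{u}^n_{j-1})$ along the whole trajectory, so the stage equations of \eqref{eq3.10} and \eqref{eq2.8} coincide; an induction on $n$ using the unique solvability of each linearly implicit stage equation $(I - \tau a_{ii}\mathcal{L})\tilde{u}^n_i = (\text{known})$ gives $\tilde{u}^n_i = u^n_i$ and $\tilde{\phi}^n = \phi^n$, which is \romannumeral3) and transfers \romannumeral1) and \romannumeral2) to \eqref{eq2.8}. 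I expect the one genuinely technical part to be the bilinear-form bookkeeping in the first step --- isolating the leading non-positive term, tracking the $O(\tau)$ and $O(\tau^2)$ contributions, and checking that the $\tilde{F}$-correction is dominated by the $\bar{H_1}(\beta)$ quadratic form using nothing more than the Lipschitz bound; the interlacing argument, the mass conservation, Lemma \ref{lemma3.1} and the equivalence are then short.
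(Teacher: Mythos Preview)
Your proposal is correct and follows essentially the same route as the paper: derive the one-step energy inequality \eqref{eq3.22} for the auxiliary scheme using only the global Lipschitz bound on $\tilde{f}$, repeat the computation on the truncated $m$-stage system and invoke Cauchy interlacing (Lemma~\ref{lemma3.2}) together with the lower-triangular structure of $A,\hat{A},E_L$ to reuse the same stabilizers at every intermediate stage, then apply Lemma~\ref{lemma3.1} (with the mass conservation you note) for \romannumeral2) and conclude \romannumeral3) by the pointwise identity $\tilde{f}=f$ on $[-M_0,M_0]$. Your explicit mention of discrete mass conservation and of unique solvability of the implicit stages is a welcome addition that the paper leaves implicit.
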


\medskip
\begin{proof}
	Let's check out first that how condition \eqref{eq3.18} keeps $\bar{H_1}(\beta)$, $\bar{H_2}(\alpha)$ positive-definite. We find that
	\begin{equation}\label{M}
		\begin{aligned}
			&H_1(\beta)=\beta Q-\frac{L}{2}I,\\
			&H_2 (\alpha)=\left(\alpha+1\right)Q+E_L -I-\frac{1}{2}E,\\
		\end{aligned}
	\end{equation}
	then
	\begin{equation}\label{Mbar}
		\begin{aligned}
			&\bar{H_1}(\beta)=\beta \bar{Q}-\frac{L}{2}I,\\
			&\bar{H_2} (\alpha )=\left(\alpha +1\right)\bar{Q}-\frac{1}{2}I.
		\end{aligned}
	\end{equation}
	It is thus clear that $\bar{H_1}(\beta)$ and $\bar{H_2}(\alpha)$ are positive-definite if $Q$ is positive-definite and $\alpha$, $\beta$ satisfy \eqref{eq3.18}.

	The system of the IMEX-RK method \eqref{eq3.10} has the following matrix form:
	\begin{align}
		\tilde{u}_0^n & =\tilde{\phi}^n,\label{eq3.20}\\
		\left(\begin{array}{c}
			\tilde{u}_1^n \\
			\tilde{u}_2^n \\
			\vdots \\
			\tilde{u}_s^n 
		\end{array}\right)&=\left(\begin{array}{c}
			\tilde{u}_0^n \\
			\tilde{u}_0^n \\
			\vdots \\
			\tilde{u}_0^n 
		\end{array}\right)+\tau\left(A\left(\begin{array}{c}
			{\mathcal{L}\tilde{u}}_1^n \\
			{\mathcal{L}\tilde{u}}_2^n \\
			\vdots \\
			{\mathcal{L}\tilde{u}}_s^n 
		\end{array}\right)+\hat{A} \left(\begin{array}{c}
			\tilde{N}\left(\tilde{u}_0^n \right)\\
			\tilde{N}\left(\tilde{u}_1^n \right)\\
			\vdots \\
			\tilde{N}\left(\tilde{u}_{s-1}^n \right)
		\end{array}\right)\right).\label{eq3.20.1}
	\end{align}
	Following the similar process of \cite[Theorem 3.1.]{fu2024energydiminishingimplicitexplicitrungekutta} and using the Lipschitz continuity of $\tilde{f}$, we derive
	\begin{align}\label{eq3.21}
		&\quad\ \left(\tilde{F}(\tilde{\phi}^{n+1})-\tilde{F}(\tilde{\phi}^n),\textbf{1} \right)=\sum^{s-1}_{i=0}\left(\tilde{F}(\tilde{u}^{n}_{i+1})-\tilde{F}(\tilde{u}^{n}_{i}),\textbf{1} \right)  \\
		&\leq \left(\tilde{u}^n_1-\tilde{u}^n_0,\tilde{u}^n_2-\tilde{u}^n_1,\cdots,\tilde{u}^n_s-\tilde{u}^n_{s-1} \right)\left(\begin{array}{c}
			\tilde{f}(\tilde{u}_0^n) \\
			\tilde{f}(\tilde{u}_1^n) \\
			\vdots \\
			\tilde{f}(\tilde{u}_{s-1}^n) 
		\end{array} \right)
		+\frac{L}{2}\left(\tilde{u}^n_1-\tilde{u}^n_0,\tilde{u}^n_2-\tilde{u}^n_1,\cdots,\tilde{u}^n_s-\tilde{u}^n_{s-1} \right)^2 \notag\\
		&=\tilde{\omega}^T\left(\begin{array}{c}
			\tilde{f}(\tilde{u}_0^n) \\
			\tilde{f}(\tilde{u}_1^n) \\
			\vdots \\
			\tilde{f}(\tilde{u}_{s-1}^n) 
		\end{array} \right)+\frac{L}{2}\tilde{\omega}^2 ,\notag
	\end{align}
	and
	\begin{align}\label{eq3.22}
		&\quad\ \mathcal{E}\left(\tilde{\phi}^{n+1} \right)-\mathcal{E}\left(\tilde{\phi}^n \right)\\
		&\le \frac{L}{2}\tilde{\omega}^2+\frac{1}{2}\tilde{\omega}^T E\left(P_a \tilde{\omega} \right)
		{+\tilde{\omega} }^T \left(\frac{1}{\tau}{{\hat{A} }^{-1} E}_L \left(\Delta^{-1} \tilde{\omega} \right)-{\hat{A} }^{-1} {AE}_L \left(P_a \tilde{\omega} \right)\right)\notag\\
		&\quad-\alpha \tilde{\omega}^T Q\left(P_a \tilde{\omega} \right)-\beta w^T Q\tilde{\omega} \notag\\
		&={-\tilde{\omega} }^T H_2 \left(P_a \tilde{\omega} \right)-\tilde{\omega}^T H_1 \tilde{\omega} +\frac{1}{\tau}\tilde{\omega}^T H_0 \left(\Delta^{-1} \tilde{\omega} \right)\notag\\
		&={-\tilde{\omega} }^T \bar{H_2} \left(P_a \tilde{\omega} \right)-\tilde{\omega}^T \bar{H_1} \tilde{\omega} +\frac{1}{\tau}\tilde{\omega}^T \bar{H_0} \left(\Delta^{-1} \tilde{\omega} \right),\notag
	\end{align}	
	where $\tilde{\omega}^T=\left(\tilde{u}^n_1-\tilde{u}^n_0,\tilde{u}^n_2-\tilde{u}^n_1,\cdots,\tilde{u}^n_s-\tilde{u}^n_{s-1} \right)$, $\textbf{1}$ represents a s-dimensional vector with all entries equal to $1$, $(\cdot,\cdot)$ represents the $L^2$ inner product in space, the $L^2$ inner product for vector functions is simply defined by ${\mathit{\mathbf{x}}}^T \mathit{\mathbf{y}}=\sum_{i=1}^n \left(x_i ,y_i \right)$ and $Q$, $H_2$, $H_1$, $H_0$ have been given in \eqref{eq3.14}-\eqref{eq3.16}.

	According to \eqref{eq3.18}, the selection of $\alpha$, $\beta$ ensure the positive-definiteness of $\bar{H}_2(\alpha)$, $\bar{H}_1(\beta)$ respectively, and $\bar{H}_0$ has already been positive-definite in Theorem \ref{the3.3}. Then IMEX-RK method \eqref{eq3.10} satisfies the energy dissipation law.

	Now we show that $M_0$ is the uniform upper bound of all $\left\|\tilde{u}^n_i \right\|_{L^{\infty}} $ by proving $\mathcal{E}(\tilde{u}^n_i)\leq \mathcal{E}(\tilde{u}^n_0)$, $i=1,2,...,s-1$ and using lemma \ref{lemma3.1}. Let's perform the same process on $\tilde{u}^n_i$, $i=1,2,...,s-1$ as above, considering the first $m$ lines of $\eqref{eq3.20.1}$, we have
	\begin{align}\label{eq3.23}
		\left(\begin{array}{c}
			\tilde{u}_1^n \\
			\tilde{u}_2^n \\
			\vdots \\
			\tilde{u}_m^n 
		\end{array}\right)=\left(\begin{array}{c}
			\tilde{u}_0^n \\
			\tilde{u}_0^n \\
			\vdots \\
			\tilde{u}_0^n 
		\end{array}\right)+\tau\left(A[m]\left(\begin{array}{c}
			{\mathcal{L}\tilde{u}}_1^n \\
			{\mathcal{L}\tilde{u}}_2^n \\
			\vdots \\
			{\mathcal{L}\tilde{u}}_m^n 
		\end{array}\right)+\hat{A}[m] \left(\begin{array}{c}
			\tilde{N}\left(\tilde{u}_0^n \right)\\
			\tilde{N}\left(\tilde{u}_1^n \right)\\
			\vdots \\
			\tilde{N}\left(\tilde{u}_{m-1}^n \right)
		\end{array}\right)\right),
	\end{align}
	and
	\begin{align}\label{eq3.24}
		\mathcal{E}\left(\tilde{u}^n_m \right)-\mathcal{E}\left(\tilde{u}^n_0 \right)&\leq {-\tilde{\omega} }^T_m H_2[m] \left(P_a \tilde{\omega}_m \right)-\tilde{\omega}^T_m H_1[m] \tilde{\omega}_m +\frac{1}{\tau}\tilde{\omega}^T_m H_0[m] \left(\Delta^{-1} \tilde{\omega}_m \right)\\
		&={-\tilde{\omega} }^T_m \bar{H_2}[m] \left(P_a \tilde{\omega}_m \right)-\tilde{\omega}^T_m \bar{H_1}[m] \tilde{\omega}_m +\frac{1}{\tau}\tilde{\omega}^T_m \bar{H_0}[m] \left(\Delta^{-1} \tilde{\omega}_m \right),\notag
	\end{align}
	where $\tilde{\omega}_m$ denotes a vector which is composed of the first $m$ elements of $\tilde{\omega}$, and
	\begin{align*}
		&H_0[m]={\hat{A} }^{-1}[m] E_L[m],\\
		&H_1[m](\beta)=\beta Q[m]-\frac{L}{2}I,\\
		&H_2[m](\alpha)=\alpha Q[m]-\frac{1}{2}E[m]+\hat{A}^{-1}[m]A[m]E_L[m],\\
		&Q[m]=\left({\hat{A} }^{-1}[m] A[m]-I\right)E_L[m] +I,
	\end{align*}
	because $A$, $\hat{A}$ and $E_L$ are all lower triangle matrices. This make it possible for us to use lemma \ref{lemma3.2}.

	We now illustrate that if condition \eqref{eq3.18} is satisfied, then the matrices $\bar{Q}[m]$, $\bar{H_0}[m]$, $\bar{H_1}[m]$ and $\bar{H_2}[m]$ are all positive-definite, i.e. $\mathcal{E}\left(\tilde{u}^n_m \right)-\mathcal{E}\left(\tilde{u}^n_0 \right)\leq 0$. First, the principal submatrix of a real symmetric positive-definite matrix is still positive-definite, so $\bar{H_0}[m]$ and $\bar{Q}[m]$ are positive-definite. Then, according to Lemma \ref{lemma3.2}, we have $\lambda_{min}(\bar{Q})\leq \lambda_{min}(\bar{Q}[m])$, implying that 
	the choice of the stabilizer parameter $\alpha$ and $\beta$ can ensure the positive definiteness of $\bar{H_2}[m]$ and $\bar{H_1}[m]$,  thereby obviating the need to recompute the stabilizer values for intermediate numerical solutions.

	With the help of lemma \ref{lemma3.1}, we have $\left\|\tilde{u}^n_i \right\|_{L^{\infty}}\leq M(\tilde{\phi}^0,\Omega)=M_0$, $i=1,2,\dots,s$. Performing similar process for $n=0,1,\dots,\frac{T}{\tau}$, the uniform boundedness at any time step is obtained. So $f(u^n_i)=\tilde{f}(\tilde{u}^n_i)$ for $i=0,1,...,s,\; n=0,1,...,\frac{T}{\tau}$, then IMEX-RK method \eqref{eq2.8} and \eqref{eq3.10} share the same numerical solution. 
\end{proof}
\medskip

\begin{remark}
	Actually, introducing an auxiliary problem by truncating $F$ into quadratic growth at infinities can also be viewed as a mathematical induction. By the Taylor expansion, we have 
	\begin{align*}
		F\left( u\right)-F\left( v\right)=\left(v^3 -\left(\varepsilon +a\right)v \right)\left(u-v \right)+\frac{1}{2}\left(3w^2-\left(\varepsilon +a\right)\right)\left(u-v \right) ^2,
	\end{align*}
	where $w$ is between $u$ and $v$ pointwisely. Because of the explicit treatment of $f$ in \eqref{eq2.8}, $w$ always lies in the known numerical solutions, so its uniform boundedness is trivial due to inductive hypotheses.
\end{remark}
\begin{remark}
	For any matrix $M \in \mathbb{C}^{n\times n}$, $M$ is positive-definite if and only if $\frac{1}{2}(M+M^*)$ is positive-definite, here $M^*$ represents the conjugate transpose of $M$. The reason why we base on $\bar{M}$ but not $M$ in Theorem \ref{the3.3} is that, we can facilitate the discussion of eigenvalue of $H_1(\beta)$ and $H_2(\alpha)$ from \eqref{M} and \eqref{Mbar} without changing the result of inequality \eqref{eq3.22} and \eqref{eq3.24}. This also make it feasible to use Cauchy interlace theorem.
\end{remark}
\begin{remark}
	Actually without imposing the Lipschitz condition, it is still possible to establish the uniform boundedness of the solution by leveraging error estimates combined with the mathematical induction. However, this approach does not facilitate the derivation of unconditional energy dissipation. In this work,  we can obtain unconditional energy dissipation without imposing Lipschitz condition in Theorem \ref{the3.3}, as the uniform boundedness is recovered through the energy dissipation analysis rather than reliance on error estimates.
\end{remark}

\section{Error analysis for PFC model}
In this section, we will prove the $L^{\infty}$ convergence for the constructed IMEX-RK scheme \eqref{eq2.8}. 

\medskip
\begin{theorem}\label{the4.1}
	Suppose that \eqref{eq3.14}-\eqref{eq3.18} are satisfied. Then for a $s$-stage IMEX-RK scheme \eqref{eq2.8} which is of order $p$, suppose that $\phi_e$ is smooth enough and $\tau$ satisfies an $O(1)$ constraint, we have the following error estimate:
	\begin{align}\label{eq4.1}
		\left\|\phi_e\left(t_n\right)-\phi^n\right\|_{L^{\infty}}\leq Ce^{CT}\tau^p,
	\end{align}
	where $C$ is a positive constant dependent on the stabilizers and the smoothness of $\phi_e$ but independent of $\tau$. 
\end{theorem}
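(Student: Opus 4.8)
The plan is the standard consistency-plus-stability route, made short by three facts already available. By Theorem~\ref{the3.3} the numerical solution $\phi^n$ and all stage values $u^n_i$ satisfy $\|u^n_i\|_{L^\infty}\le M_0$ uniformly in $i$ and $n$; $\phi_e$ is smooth, hence bounded together with its derivatives on $[0,T]$; and both the exact solution and the scheme conserve mass, so every stage error $e^n_i$ (with $e=\phi_e-u$) has zero mean on $\Omega$. From the first two and $f(\phi_e)-f(u)=\big(\phi_e^2+\phi_e u+u^2-(\varepsilon+a)\big)(\phi_e-u)$ one gets the Lipschitz-on-the-bounded-set bound $\|f(\phi_e)-f(u)\|_{L^2}\le C\|e\|_{L^2}$ with $C=C(M_0,\|\phi_e\|_{L^\infty})$, together with its $H^1$ analogue (using the smoothness of $\phi_e$ and the $H^1$ regularity the stages inherit from the smoothing of $(I-\tau a_{ii}\mathcal{L})^{-1}$). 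In effect the cubic nonlinearity is now as good as a globally Lipschitz one, with every constant controlled, so no bootstrap argument for boundedness is needed — this is exactly what shortens the whole estimate.

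First I would record the consistency error: inserting the exact stage values $\Phi^n_i:=\phi_e(t_n+c_i\tau)$ into \eqref{eq2.8} and Taylor expanding produces stage defects $R^n_i$, and since the IMEX pair has order $p$ and is stiffly accurate the defect controlling one step is $O(\tau^{p+1})$ in $H^2$ (with a constant depending on higher Sobolev norms of $\phi_e$), the individually lower-order intermediate defects entering only through the quadrature combination that restores order $p$. Subtracting \eqref{eq2.8} then gives the error recursion
\begin{align}\label{eq:errrec}
	e^n_i=e^n_0+\tau\sum_{j=1}^{i}a_{ij}\mathcal{L}e^n_j+\tau\sum_{j=1}^{i}\hat a_{ij}\big(N(\Phi^n_{j-1})-N(u^n_{j-1})\big)+\tau R^n_i,\qquad e^n_0=e^n,\quad e^{n+1}=e^n_s.
\end{align}

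The core step is a per-step stability estimate for $(e^n_i,P_ae^n_i)$, which, since $a>0$, is equivalent to $\|e^n_i\|_{H^2}^2$ by the interpolation inequalities \eqref{eq3.1}--\eqref{eq3.2} of Lemma~\ref{lemma3.1}. Testing \eqref{eq:errrec} against $P_ae^n_i$ and assembling the $s$ stages exactly as in \eqref{eq3.20}--\eqref{eq3.22}, the part that is linear in $e$ reproduces the quadratic form in the difference vector $\tilde\omega$ governed by $\bar H_0,\bar H_1,\bar H_2$, non-positive under \eqref{eq3.14}--\eqref{eq3.18}, plus the genuine dissipation $\sum_i\tau a_{ii}|e^n_i|_*^2$ with $|v|_*^2:=-(\mathcal{L}v,P_av)\ge0$; the point — using $\mathcal{L}=\Delta\big((1+\alpha)P_a+\beta I\big)$, $1+\alpha>0$, and working on the zero-mean subspace furnished by mass conservation — is that $|\cdot|_*$ controls every sixth-order term generated by $\mathcal{L}$ and by the linear part of $N(\Phi^n_{j-1})-N(u^n_{j-1})$, so no derivatives are lost when these are absorbed into the dissipation. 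The genuinely nonlinear term $f(\Phi^n_{j-1})-f(u^n_{j-1})$ is estimated by the Lipschitz-on-the-bounded-set bounds of the first paragraph — just as in the globally Lipschitz analysis of \cite{fu2024energydiminishingimplicitexplicitrungekutta}, only with all constants now under control — and, after a $\tau$-weighted Young inequality and absorption, contributes $\le C\tau\sum_l(e^n_l,P_ae^n_l)$, while the defect terms contribute $O(\tau^{2p+1})$. Collecting — the $O(1)$ restriction on $\tau$ keeping the absorption constants consistent — yields
\begin{align}\label{eq:step}
	(e^{n+1},P_ae^{n+1})\le(1+C\tau)(e^n,P_ae^n)+C\tau^{2p+1}.
\end{align}

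Finally, the discrete Gronwall inequality applied to \eqref{eq:step} with $e^0=0$ gives $(e^n,P_ae^n)\le Ce^{CT}\tau^{2p}$ for all $n$ with $n\tau\le T$, and then $(e^n,P_ae^n)\gtrsim\|e^n\|_{H^2}^2$ together with the embedding $H^2(\Omega)\hookrightarrow L^\infty(\Omega)$ (valid for $d=2,3$) yields $\|e^n\|_{L^\infty}\le C(e^n,P_ae^n)^{1/2}\le Ce^{CT}\tau^p$, which is \eqref{eq4.1}. I expect the main obstacle to be the per-step estimate \eqref{eq:step}: preventing the sixth-order stiff operator $\mathcal{L}$ from costing derivatives forces the whole estimate into the $P_a$-weighted norm and requires exploiting simultaneously the sign $(\mathcal{L}v,P_av)\le0$, the diagonal DIRK dissipation, the matrix positivity \eqref{eq3.14}--\eqref{eq3.18}, and mass conservation (to remove the zero mode), all while respecting the multi-stage bookkeeping and avoiding the order reduction that the less-than-$p$ stage order of the RK pair would otherwise cause — this last being precisely where stiff accuracy and the Butcher conditions \eqref{eq2.7} are genuinely used.
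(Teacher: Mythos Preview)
Your overall strategy---consistency plus stability in the weighted norm $(e,P_ae)$, followed by the Sobolev embedding $H^2\hookrightarrow L^\infty$---matches the paper, and you correctly identify that Theorem~\ref{the3.3} supplies the uniform $L^\infty$ bound that turns the cubic nonlinearity into an effectively Lipschitz one. The stability manipulation via the quadratic forms $\bar H_0,\bar H_1,\bar H_2$ and the discrete Gronwall step are also essentially as in the paper.

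The gap is in your consistency step. You insert the exact values $\Phi^n_i=\phi_e(t_n+c_i\tau)$ at \emph{every} stage, producing stage defects $\tau R^n_i$; for a DIRK-based IMEX scheme of order $p$ the intermediate $R^n_i$ are only $O(\tau^{q})$ with $q$ the \emph{stage order}, generically $q<p$. Your assertion that these lower-order pieces ``enter only through the quadrature combination that restores order $p$'' is not what happens in the energy inequality: after multiplying by $\hat A^{-1}$ and pairing against the difference vector $q$, the defect contribution is $\frac{1}{\tau}q^T\hat A^{-1}(\tau R^n_1,\dots,\tau R^n_s)^T$, and the components enter individually with no cancellation---there is no equality in which the order conditions can be invoked. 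The resulting bound degrades to $O(\tau^{2q+1})$ per step and hence $O(\tau^q)$ globally, i.e., order reduction. The paper avoids this entirely by a different choice of reference: it sets $\bar u^n_0=\phi_e(t_n)$ and defines $\bar u^n_i$ as the \emph{scheme's own stage values} started from that exact datum (equations \eqref{eq4.4}--\eqref{eq4.5}). With this choice the intermediate error equations carry \emph{no} defect at all, and the only residual is the single one-step error $r_{n+1}=\phi_e(t_{n+1})-\bar u^n_s=O(\tau^{p+1})$ sitting in the last slot of \eqref{eq4.6}; the boundedness of the $\bar u^n_i$ follows from Theorem~\ref{the3.3} applied to the initial value $\phi_e(t_n)$. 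This is the missing idea in your proposal.
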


\medskip
\begin{proof}
	The system of the IMEX-RK method \eqref{eq2.8} has the following matrix form:
	\begin{align}
		u_0^n & =\phi^n,\label{eq4.2}\\
		\left(\begin{array}{c}
			u_1^n \\
			u_2^n \\
			\vdots \\
			u_s^n 
		\end{array}\right)&=\left(\begin{array}{c}
			u_0^n \\
			u_0^n \\
			\vdots \\
			u_0^n 
		\end{array}\right)+\tau\left(A\left(\begin{array}{c}
			{Lu}_1^n \\
			{Lu}_2^n \\
			\vdots \\
			{Lu}_s^n 
		\end{array}\right)+\hat{A} \left(\begin{array}{c}
			N\left(u_0^n \right)\\
			N\left(u_1^n \right)\\
			\vdots \\
			N\left(u_{s-1}^n \right)
		\end{array}\right)\right),\label{eq4.3}
	\end{align}	
	In order to obtain local truncation error, we define intermediate functions $\bar{u}_i^n$ by replacing $\phi^n$ with exact solution at corresponding time in \eqref{eq2.8}:
	\begin{align}
		\bar{u}_0^n & =\phi_e(t_n),\label{eq4.4}\\
		\left(\begin{array}{c}
			\bar{u}_1^n \\
			\bar{u}_2^n \\
			\vdots \\
			\bar{u}_s^n 
		\end{array}\right)&=\left(\begin{array}{c}
			\bar{u}_0^n \\
			\bar{u}_0^n \\
			\vdots \\
			\bar{u}_0^n 
		\end{array}\right)+\tau\left(A\left(\begin{array}{c}
			{\mathcal{L}\bar{u}}_1^n \\
			{\mathcal{L}\bar{u}}_2^n \\
			\vdots \\
			{\mathcal{L}\bar{u}}_s^n 
		\end{array}\right)+\hat{A} \left(\begin{array}{c}
			N\left(\bar{u}_0^n \right)\\
			N\left(\bar{u}_1^n \right)\\
			\vdots \\
			N\left(\bar{u}_{s-1}^n \right)
		\end{array}\right)\right).\label{eq4.5}
	\end{align}
	We define that $e_{i}^n=\bar{u}_{i}^n-u_{i}^n$, $i=0,1,...,s-1$, $e_{s}^n=\phi_e(t_{n+1})-u_{s}^n=r_{n+1}+\bar{u}_{s}^n-u^n_s$, where $r_{n+1}=\phi_e(t_{n+1})-\bar{u}^n_s$ is the local truncation error of order $p+1$.

	By computing the difference between \eqref{eq4.3} and \eqref{eq4.5}, we have
	\begin{align}\label{eq4.6}
		\left(\begin{array}{c}
			e_1^n \\
			e_2^n \\
			\vdots \\
			e_s^n 
		\end{array}\right)&=\left(\begin{array}{c}
			e_0^n \\
			e_0^n \\
			\vdots \\
			e_0^n 
		\end{array}\right)+\tau \left(A\left(\begin{array}{c}
			{\mathcal{L}e}_1^n \\
			{\mathcal{L}e}_2^n \\
			\vdots \\
			{\mathcal{L}e}_s^n 
		\end{array}\right)+\hat{A}\left(\begin{array}{c}
			N\left(\bar{u}_0^n\right)-N\left(u_0^n \right)\\
			N\left(\bar{u}_1^n\right)-N\left(u_1^n \right)\\
			\vdots \\
			N\left(\bar{u}_{s-1}^n\right)-N\left(u_{s-1}^n \right)
		\end{array}\right)\right)
		+\left(\begin{array}{c}
			0\\
			0\\
			\vdots\\
			r_{n+1} 
		\end{array}\right).
	\end{align}
	Following the similar process in \cite[Theorem 4.1]{fu2024energydiminishingimplicitexplicitrungekutta} and using the uniform boundedness of numerical solutions in Theorem \ref{the3.3}, we can obtain
	\begin{align}\label{eq4.7}
		&\quad\ \frac{1}{2}\left(e_s^n ,P_a e_s^n \right)-\frac{1}{2}\left(e_s^0 ,P_a e_s^0 \right)\\
		&=\frac{1}{\tau }q^T H_0 \left(\Delta^{-1} q\right)-\beta q^T Qq-q^T H_2 \left(P_a q\right)-q^T \left(\begin{array}{c}
			f\left({\bar{u} }_0^n \right)-f\left(u_0^n \right)\\
			f\left({\bar{u} }_1^n \right)-f\left(u_1^n \right)\\
			\vdots \\
			f\left({\bar{u} }_{s-1}^n \right)-f\left(u_s^n \right)
		\end{array}\right)-\frac{1}{\tau }q^T{\hat{A} }^{-1} \left(\begin{array}{c}
			0\\
			0\\
			\vdots \\
			r_{n+1}
		\end{array}\right)\notag
	\end{align}
	where $q=(e_1-e_0,e_2-e_1,...,e_s-e_{s-1})^T$ and $\Delta^{-1}$ is omitted form $r_{n+1}$ because the order of $r_{n+1}$ is still.

	Now we show that $\left\|e_j^n \right\|_{L^{\infty}} $ and $\left\|\nabla e^n_j \right\|_{L^2}$ can be governed by $\left(e_j^n ,P_a e_j^n \right)$, where stabilizer $a$ plays an important role. Observing that every $e_j^n$ has zero mean, using \eqref{eq3.8} and \eqref{eq3.9} in lemma \ref{lemma3.1}, we have
	\begin{gather}
		\left\|e_j^n \right\|_{L^2} \leq \frac{C_p}{\epsilon} \left\|\Delta e_j^n\right\|_{L^2 },\label{eq4.8}\\
		{\left\|\nabla e_j^n\right\|}_{L^2 } \leq \left( \frac{C_p}{2}+\frac{1}{2\epsilon}\right) {\left\|\Delta e_j^n\right\|}_{L^2 }.\label{eq4.9}
	\end{gather}
	So we only need to check whether $\left\|\Delta e_j^n \right\|_{L^2} $ can be controlled by $\left(e_j^n ,P_a e_j^n \right)$ by using \eqref{eq3.2}. Utilizing inequality \eqref{eq3.1}, we have
	\begin{align}\label{eq4.10}
		\left(e_j^n,P_a e_j^n \right)&=\left(e_j^n,\Delta^2 e_j^n \right)+2\left(e_j^n, \Delta e_j^n\right)+\left(e_j^n,e_j^n \right)+a\left(e_j^n,e_j^n \right)   \\
		&\geq\left(e_j^n,\Delta^2 e_j^n \right)-2p\left(e_j^n,\Delta^2 e_j^n \right)-2q\left(e_j^n,e_j^n \right)+\left(1+a \right) \left(e_j^n,e_j^n \right)\notag\\
		&=\left(1-2p \right)\left\|\Delta e_j^n \right\|^2_{L^2}\notag.
	\end{align}
	For any $a>0$, we may select $q=\frac{1}{2}+\frac{1}{2}a$, what makes $1-2p>0$.

	For the last two terms in \eqref{eq4.7}, we have
	\begin{align}
		-\frac{1}{\tau }q^T{\hat{A} }^{-1} \left(\begin{array}{c}
			0\\
			0\\
			\vdots \\
			r_n 
		\end{array}\right)&\leq \frac{C_1}{\tau}\left\| \nabla^{-1}q \right\|_{L^2}^2+\frac{C_2}{\tau} \left\|\nabla r_{n+1}\right\|_{L^2}^2,\label{eq4.11}\\
		-q^T \left(\begin{array}{c}
			f\left({\bar{u} }_0^n \right)-f\left(u_0^n \right)\\
			f\left({\bar{u} }_1^n \right)-f\left(u_1^n \right)\\
			\vdots \\
			f\left({\bar{u} }_s^n \right)-f\left(u_s^n \right)
		\end{array}\right)
		&\leq \frac{C_3}{\tau}\left\|\nabla^{-1}q \right\|_{L^2}^2 +C_4\tau \left\|\nabla q \right\|_{L^2}^2+C_5\left\|\nabla e^n_0 \right\|_{L^2}^2\label{eq4.12}\\
		&\leq \frac{C_3}{\tau}\left\|\nabla^{-1}q \right\|_{L^2}^2 +C_5\tau (q,P_a q)+C_7(e^n_0,P_a e^n_0).\notag
	\end{align}
	Combining inequalities \eqref{eq4.7}, \eqref{eq4.11} and \eqref{eq4.12}, setting $C_1+C_3\leq \lambda_{min}(H_0)$ and $C_5\tau \leq \lambda_{min}(H_2)$, we have
	\begin{align}\label{eq4.13}
		\left(e_s^n ,P_a e_s^n \right)\le \left(1+C\tau \right)\left(e_0^n ,P_a e_0^n \right)+\frac{C^{\prime } }{\tau }{\left\|{\nabla r}_{n} \right\|}_{L^2 }^2,
	\end{align}
	Finally, leveraging Gronwall's inequality, we derive the error estimate \eqref{eq4.1} under an $O(1)$ bound on the time step size $\tau$, with the Lipschitz condition being recovered by uniform boundedness.
\end{proof}

\begin{remark}
	\romannumeral1) For PFC model, we derive the energy dissipation together with a priori uniformly bound estimate, so the $L^{\infty}$-convergence can be easily performed. This result is significant because the energy dissipation and uniform boundedness is independent of $\tau$ and $T$, and the Lipschitz continuity condition is no longer required.

	\romannumeral2) If obtaining an a priori uniform bound for the gradient flow appears difficult, but the $L^{\infty}$-convergence can be established, then mathematical induction can be employed to achieve uniform boundedness of the numerical solutions, thereby removing the need for Lipschitz continuity. However, the proof becomes constrained by conditions on the time step size and the final time due to convergence requirements, causing the energy dissipation to no longer be unconditional or global-in-time, despite the absence of a Lipschitz condition.

	\romannumeral3) For the gradient flows with $L^{\infty}$-bounds on the numerical solution, if we can truncate nonlinear term like \eqref{eq3.12} and \eqref{eq3.13}, an unconditional energy dissipation can be presented without Lipschitz continuity. Hence for the error estimate, mathematical induction is unnecessary for error analysis, so we can weaken the constraint on time step.
\end{remark}

\begin{remark}
	Our analysis is based on the high-order temporal discretization in this work, the framework of employing Lemmas \ref{lemma3.1} and \ref{lemma3.2} to achieve unconditional uniform boundedness,  followed by deriving $L^{\infty}$-error estimates using energy methods, is broadly feasible to match with any spatial discretization methods, such as finite element method,  finite difference method and spectral method. 
\end{remark}

\section{Numerical experiments}
This section devotes to some numerical experiments conducted by IMEX-RK method \eqref{eq2.8} for PFC model. The periodic boundary conditions has been used on all the computational domains so we can employ Fourier spectral method as spatial discretization with fast Fourier transform to enhance computation. The stabilizers are set to $\alpha=0$ and $\beta=1$ unless otherwise stated. Here we use the energy-decreasing four-stage third-order IMEX-RK method \cite[Section 5]{fu2024energydiminishingimplicitexplicitrungekutta}.
\subsection{Convergence test}
We consider the PFC model \eqref{eq2.6} and demonstrate the convergence of the proposed method on $\Omega=(0,32)\times(0,32)$, with $\varepsilon = 0.025$, $a=1,0.5,0.1,0.001$ and the following smooth initial data:
\begin{align*}
	\phi_0=0.05-0.01\mathrm{cos}\left( \frac{2\pi x}{32}\right) \mathrm{cos}\left( \frac{2\pi y}{32}\right) .
\end{align*}

Set the $256\times256$ spatial mesh to ensure the spatial error is small enough so that we may ignore it compared to the temporal errors, we compute the numerical solution at $T=2$ with $\tau=2^{-k}, k=3,4,...,8$. Since we have no exact solution, Cauchy error is chosen to verify the convergence rate. The $l^{\infty}$ error of numerical solution will be released in Table \ref{table5.1}. As observed, the temporal convergence is evident and the decreasing of $a$ increases the convergence accuracy without influencing the $l^{\infty}$-convergence in Theorem \ref{the4.1}.
\begin{table}[htb]
	\caption{$l^{\infty}$-errors and convergence rates for four-stage third-order IMEX-RK method \eqref{eq2.8} in temporal direction with $\epsilon=0.025$, $\tau=2^{-k}$, $k=3,4,...,8$ and $a=1,0.5,0.1,0.001$ at $T=2$}
	\centering
	\begin{tabular}{ccccccccc}
		\toprule
		$\tau$ & $a=1$& Rate & $a=0.5$& Rate & $a=0.1$& Rate & $a=0.001$ & Rate \\
		\midrule
		$2^{-4}$&7.80542E-08&---&3.03738E-08&---&9.76150E-09&---&6.70411E-09&--- \\
		$2^{-5}$&1.20764E-08&2.6923&4.47767E-09&2.7620&1.37721E-09&2.8254&9.35122E-10&2.8418\\
		$2^{-6}$&1.70851E-09&2.8214&6.13750E-10&2.8670&1.83772E-10&2.9058&1.23953E-10&2.9154 \\
		$2^{-7}$&2.28613E-10&2.9018&8.05862E-11&2.9290&2.37671E-11&2.9505&1.59727E-11&2.9561 \\
		$2^{-8}$&2.96266E-11&2.9479&1.03329E-11&2.9633&3.02248E-12&2.9752&2.02777E-12&2.9776 \\
		\bottomrule
	\end{tabular}
	\label{table5.1}
\end{table}

\subsection{Energy stability test}
Now we will show how the original energy \eqref{eq2.3} of PFC model \eqref{eq2.6} evolves under different stabilizers and time steps. We consider the following smooth initial condition:
\begin{align*}
	\phi_0&=0.07-0.02\,\mathrm{cos}\left(\frac{\pi(x-12)}{16} \right)\mathrm{sin}\left(\frac{\pi(y-1)}{16}\right)\\
	&\quad+ 0.02\left(\mathrm{cos}\left( \frac{\pi(x+10)}{32}\right)\mathrm{cos}\left(\frac{\pi(y+3)}{32}\right) \right) ^2-0.01\left(\mathrm{sin}\left( \frac{\pi x}{8}\right)\mathrm{sin}\left(\frac{\pi(y-6)}{8}\right) \right) ^2.
\end{align*}
on $\Omega=(0,128)\times(0,128)$. We test energy evolutions with  different stabilizers and  time steps under the fix parameters 
$\epsilon=0.025$, $T=120$ and  $256\times256$  spatial mesh. 
\begin{figure}[htb]
	\centering
	{
		\begin{minipage}[b]{.48\linewidth}
			\centering
			\includegraphics[scale=0.45]{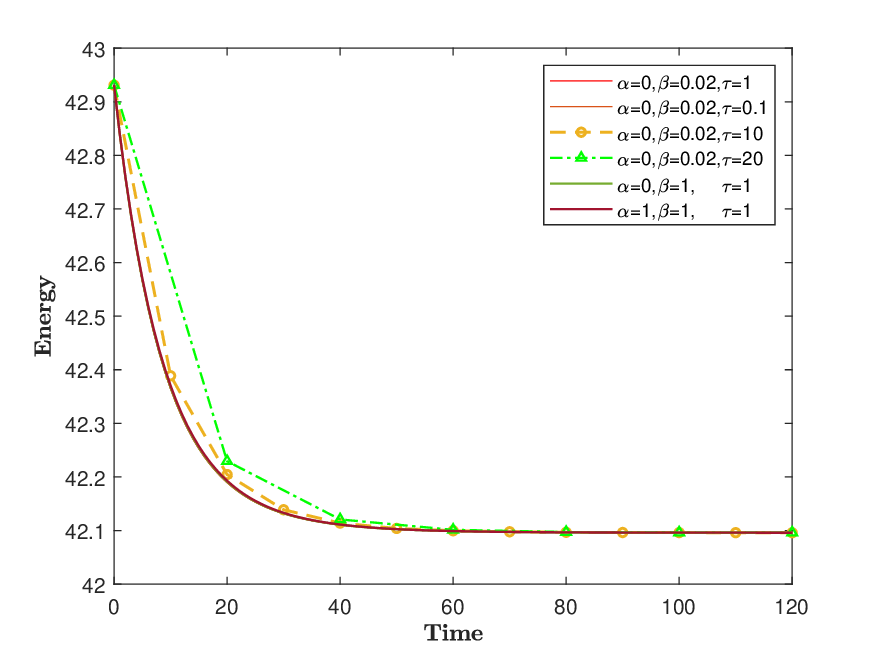}
		\end{minipage}
	}
	{
		\begin{minipage}[b]{.48\linewidth}
			\centering
			\includegraphics[scale=0.45]{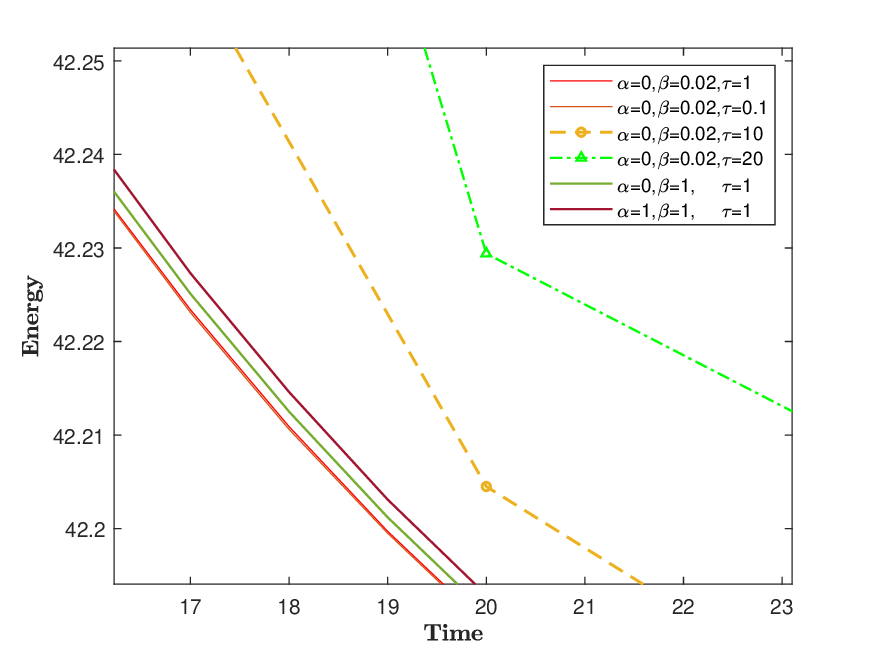}
		\end{minipage}
	}
	\caption{Evolution results of the original energy for the four-stage third-order IMEX-RK method \eqref{eq2.8} with $\epsilon=0.025$, $a=0.001$, $\tau=0.5$, different stabilizers and time steps at $T=120$}
	\label{figure5.1}
\end{figure}

In Figure \ref{figure5.1}, we present the evolution of the original energy under different parameters. Even though the energy is decreasing under different parameter selection, the larger stabilizers and longer time step will cause a slower energy dissipative progress. Actually, for the simple initial condition mentioned above, inappropriate stabilizers and larger time step will not affect the final result. In more complex situations, stabilizers that do not meet the conditions in Theorem \ref{the3.3} may cause energy to increase at certain times and oversized time step may fail to capture the evolution process of numerical solutions correctly.
\subsection{2D phase transition behaviors}
In this experiment, we simulate the phase transition behaviors on domain $\Omega=(0,128)\times(0,128)$. We compute the numerical solution at $T=2000$ with $\epsilon=0.025$, $a=0.001$ and $\tau=0.1$, and use $256\times256$ spatial mesh. The initial data is set to a random perturbation \cite{Zhang2024pfc}:
\begin{align*}
	\phi_0=0.06+0.01\,\mathrm{rand}(x,y),
\end{align*}
where $\mathrm{rand}(x,y)$ is a randomly chosen number between $-1$ and $1$. The phase transition behaviors evolution and corresponding energy have been shown in Figure \ref{figure5.2} and Figure \ref{figure5.3}. 
\begin{figure}[htb]
	\centering
	\subfigure[$t=100$]
	{
		\begin{minipage}[b]{.27\linewidth}
			\centering
			\includegraphics[scale=0.3]{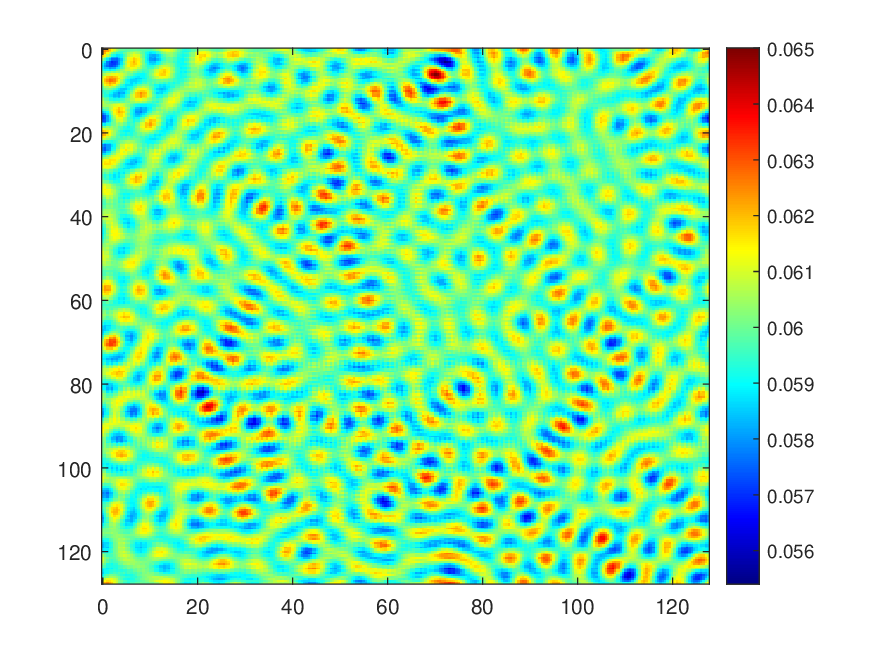}
		\end{minipage}
	}
	\subfigure[$t=400$]
	{
		\begin{minipage}[b]{.27\linewidth}
			\centering
			\includegraphics[scale=0.3]{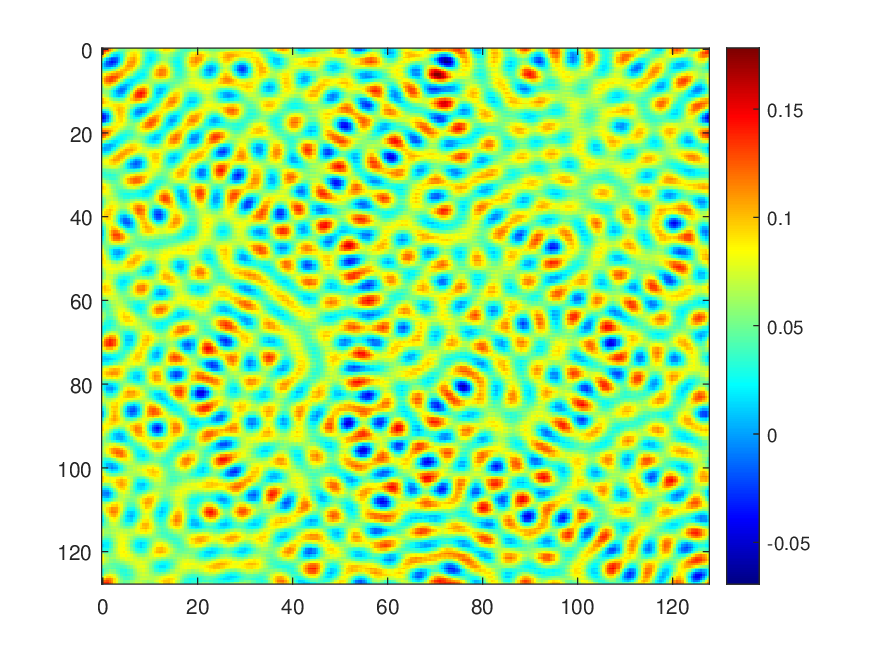}
		\end{minipage}
	}
	\subfigure[$t=600$]
	{
		\begin{minipage}[b]{.27\linewidth}
			\centering
			\includegraphics[scale=0.3]{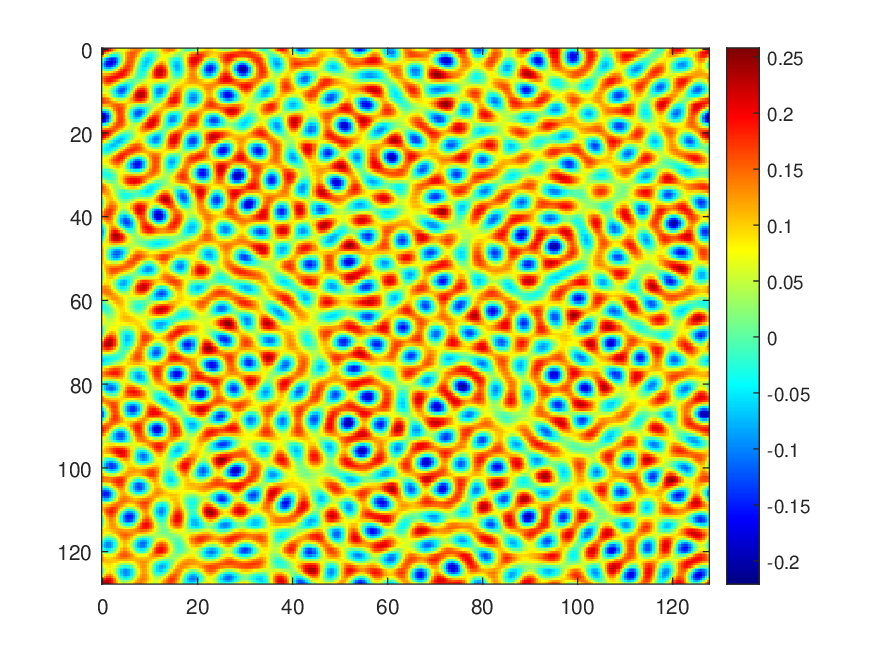}
		\end{minipage}
	}
	\subfigure[$t=800$]
	{
		\begin{minipage}[b]{.27\linewidth}
			\centering
			\includegraphics[scale=0.3]{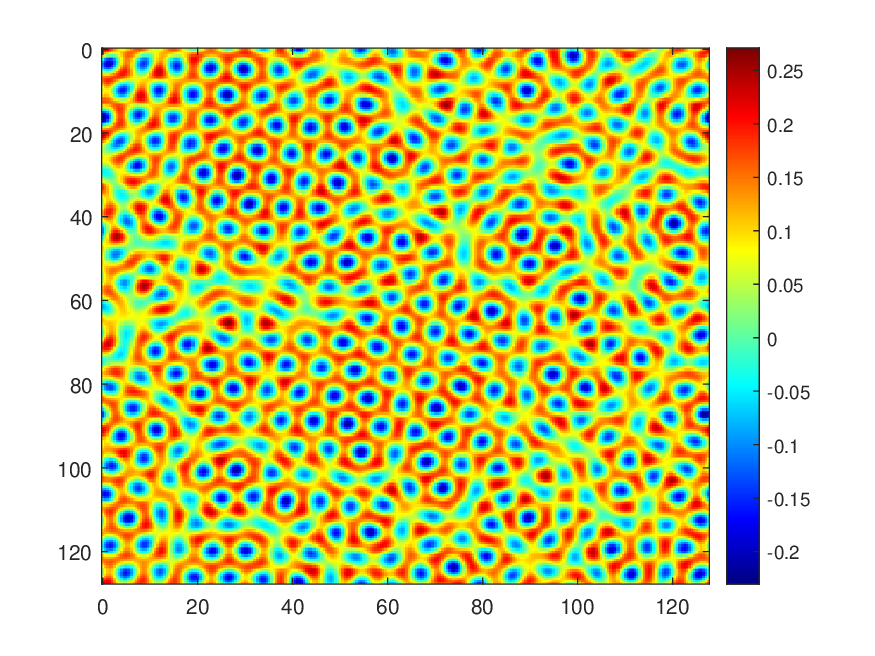}
		\end{minipage}
	}
	\subfigure[$t=1200$]
	{
		\begin{minipage}[b]{.27\linewidth}
			\centering
			\includegraphics[scale=0.3]{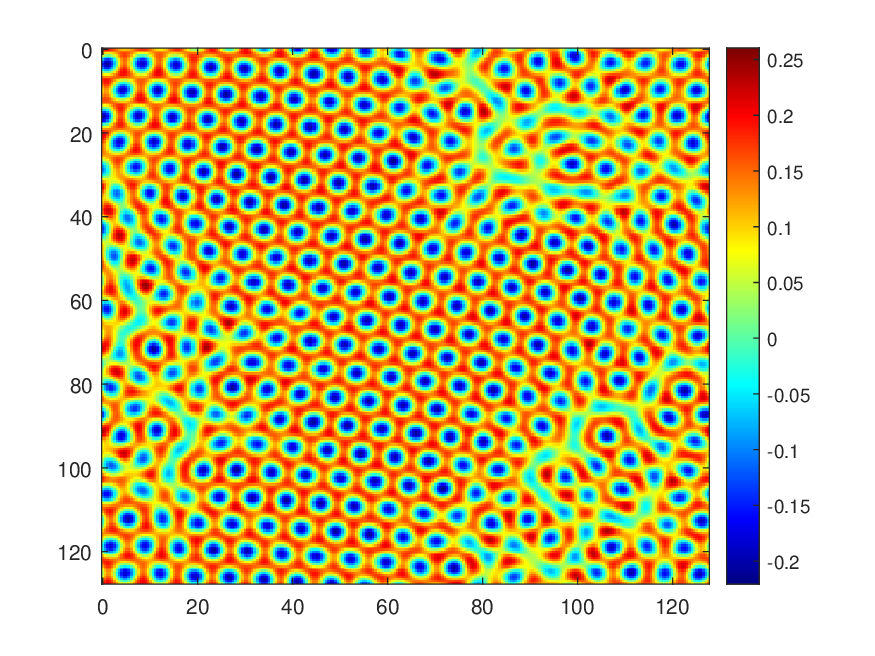}
		\end{minipage}
	}
	\subfigure[$t=2000$]
	{
		\begin{minipage}[b]{.27\linewidth}
			\centering
			\includegraphics[scale=0.3]{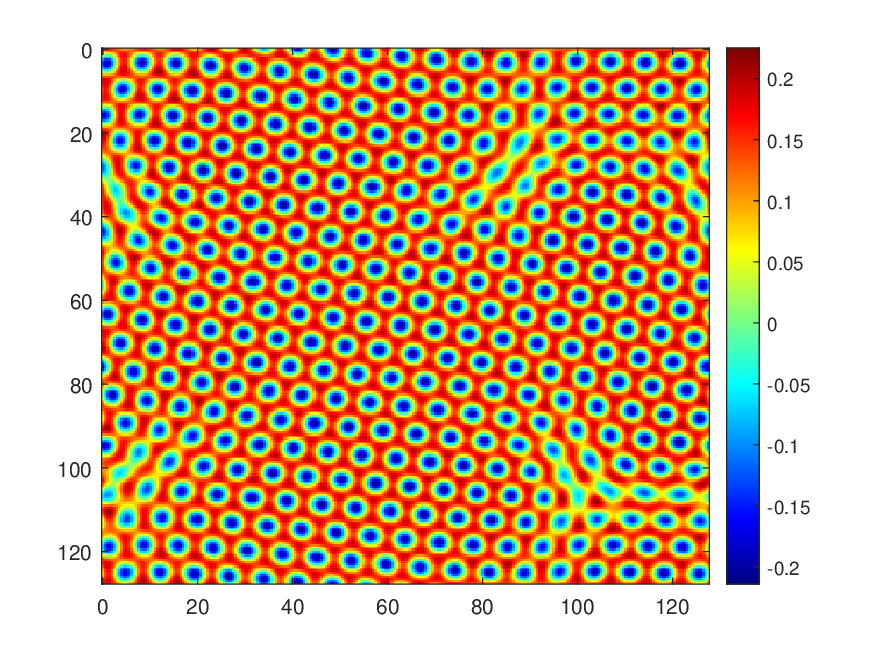}
		\end{minipage}
	}
	\caption{The snapshots of $\phi(x,y,t)$ using the four-stage third-order IMEX-RK method \eqref{eq2.8} with $\epsilon=0.025$, $a=0.001$ and $\tau=0.1$ at $t=100,400,600,800,1200,2000$, respectively.}
	\label{figure5.2}
\end{figure}
\begin{figure}
	\centering
	{
		\begin{minipage}[b]{.48\linewidth}
			\centering
			\includegraphics[scale=0.45]{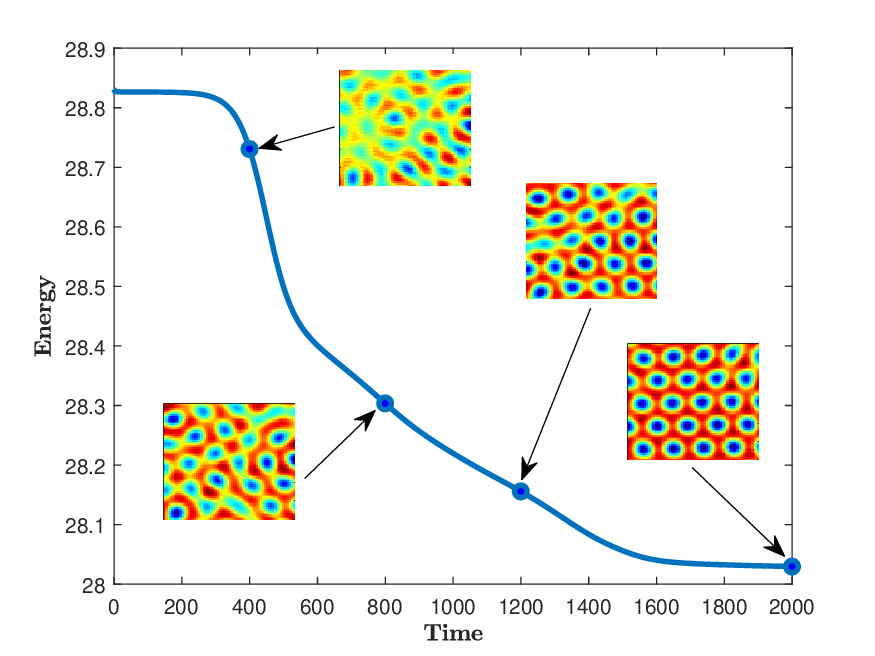}
		\end{minipage}
	}
	{
		\begin{minipage}[b]{.48\linewidth}
			\centering
			\includegraphics[scale=0.45]{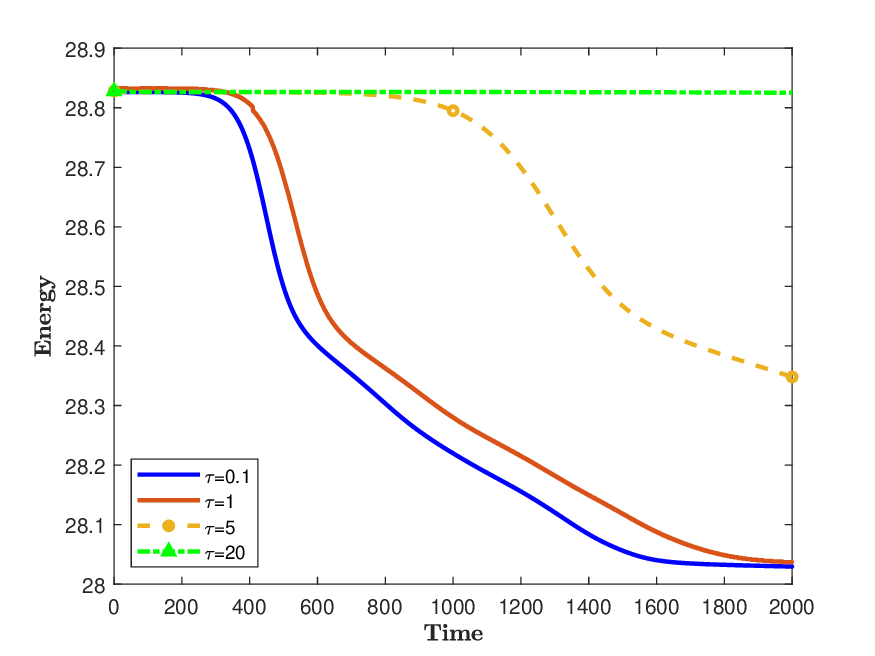}
		\end{minipage}
	}
	\caption{Energy evolution result for the phase behaviors. The inserting figure (left) showing the evolution of phase transition at $t=400,800,1200,2000$ and evolution of the energies (right) with $\tau=0.1,1,5,20$.}
	\label{figure5.3}
\end{figure}

At $t=400$, the hexagonal phase initially emerges and expands over time. Eventually, the entire computational domain almost becomes full filled with hexagonal phase. As can be seen from the right figure of Figure \ref{figure5.3}, oversized $\tau$ leads to a severe delay of energy dissipation. But for each $\tau$, corresponding to  the unconditional energy dissipation in Theorem \ref{the3.3}, the energy is still decreasing, including the case of $\tau=20$ which is hard to distinguish due to the scale of chart.
\subsection{2D crystal growth simulation}
In this simulation, we simulate the evolution of three crystallites
with different orientations, which leads to a complex dynamical process involving the motion of liquid-crystal interfaces and grain boundaries separating the crystals.

We consider the following initial crystallites \cite{Zhang2024pfc,Qiao2024pfc}
\begin{align*}
	\phi_0\left( x_l,y_l\right) =\bar{\phi}+C\left( \mathrm{cos}\left( \frac{p}{\sqrt{3}}y_l\right)\mathrm{cos}\left( px_l\right)-0.5\,\mathrm{cos}\left(\frac{2p}{\sqrt{3}}y_l \right) \right),
\end{align*}
where $x_l$ and $y_l$ define a local system of coordinates oriented with the crystallite lattice. To generate crystallites with different orientations, we define the local coordinates $(x_l,y_l)$ using an affine transformation of the global coordinates $(x,y)$ with a rotation angle $\theta$, i.e.,
\begin{align*}
	&x_l=x\,\mathrm{sin}\left(\theta \right)+y\,\mathrm{cos}\left( \theta\right), \\
	&y_l=-x\,\mathrm{cos}\left(\theta \right) +y\,\mathrm{sin}\left( \theta\right) ,
\end{align*}
where $\theta=-\frac{\pi}{4}, 0, \frac{\pi}{4}$ for $l=1,2,3$, respectively, and $\phi_0=0.285$, $C=0.446$, $p=0.66$, $a=0.001$ and $\epsilon=0.25$.

The initial configuration is designed as follows. First we set $\phi_0=0.285$ all over the domain $\Omega$, Then we modify this constant configuration by setting three perfect crystallites in three small square patches with size of $30\times30$, lying in the computational domain, as illustrated by the first picture in Figure \ref{figure5.6}. The computational domain $\Omega=(0,512)\times(0,512)$, we choose  $512\times512$ spatial mesh and time step $\tau=0.1$.
\begin{figure}[htb]
	\centering
	\subfigure[$t=0$]
	{
		\begin{minipage}[b]{.3\linewidth}
			\centering
			\includegraphics[scale=0.32]{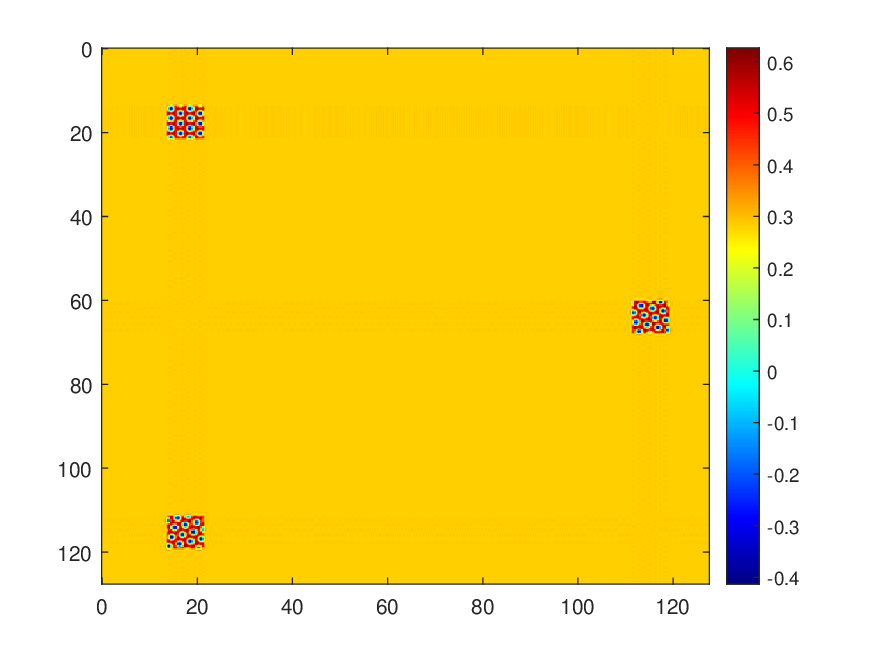}
		\end{minipage}
	}
	\subfigure[$t=100$]
	{
		\begin{minipage}[b]{.3\linewidth}
			\centering
			\includegraphics[scale=0.32]{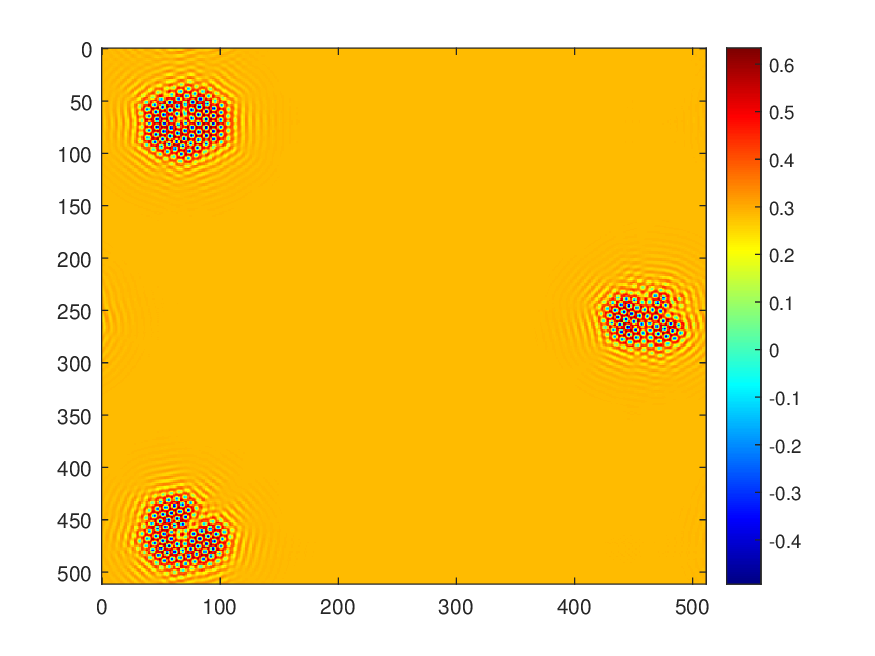}
		\end{minipage}
	}
	\subfigure[$t=200$]
	{
		\begin{minipage}[b]{.3\linewidth}
			\centering
			\includegraphics[scale=0.32]{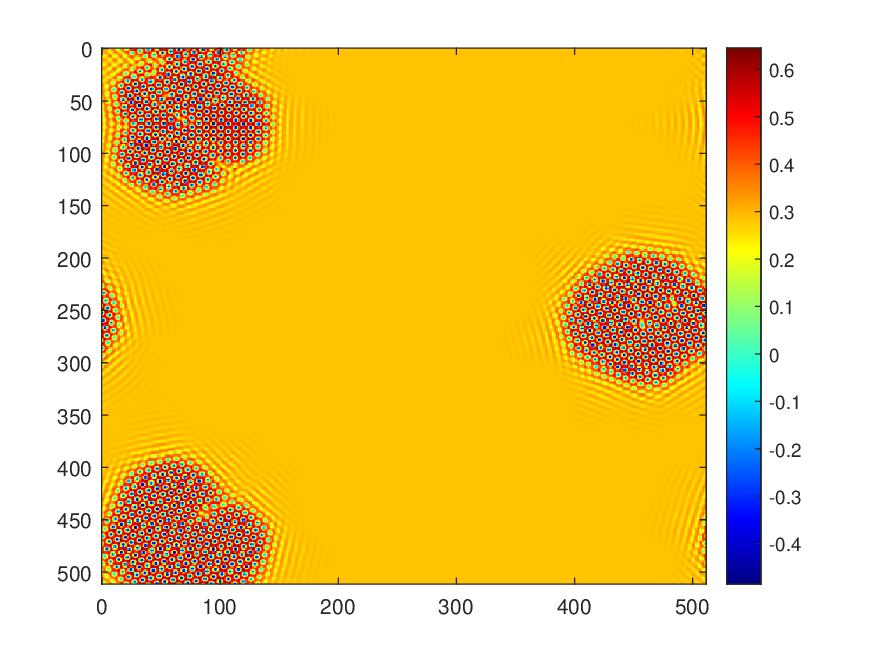}
		\end{minipage}
	}
	\subfigure[$t=400$]
	{
		\begin{minipage}[b]{.3\linewidth}
			\centering
			\includegraphics[scale=0.32]{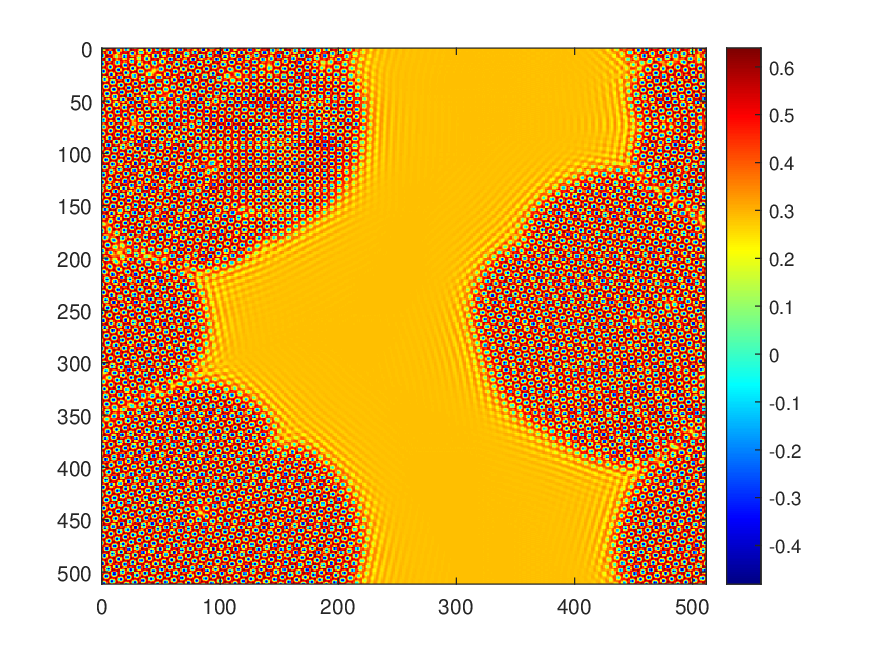}
		\end{minipage}
	}
	\subfigure[$t=600$]
	{
		\begin{minipage}[b]{.3\linewidth}
			\centering
			\includegraphics[scale=0.32]{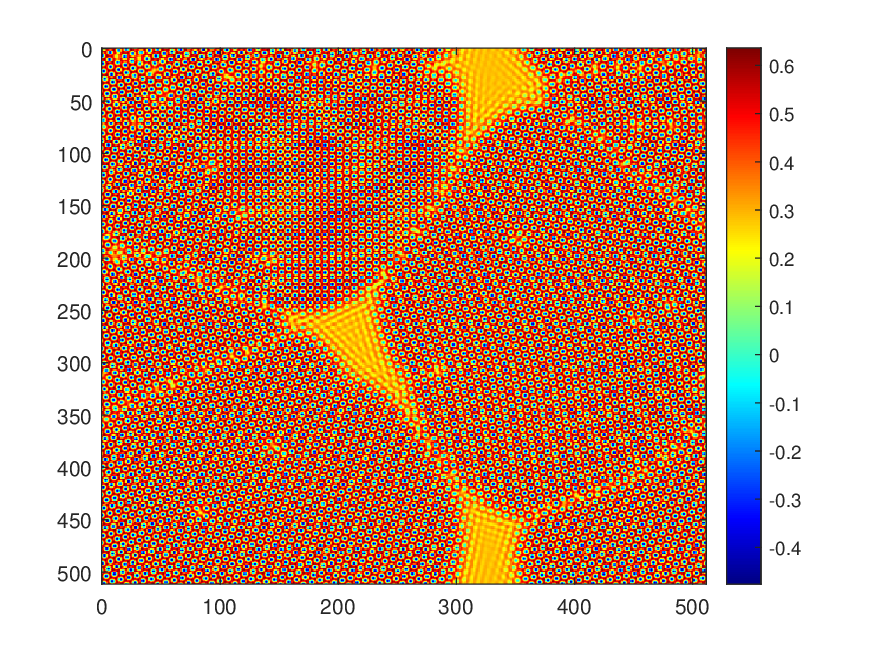}
		\end{minipage}
	}
	\subfigure[$t=1500$]
	{
		\begin{minipage}[b]{.3\linewidth}
			\centering
			\includegraphics[scale=0.32]{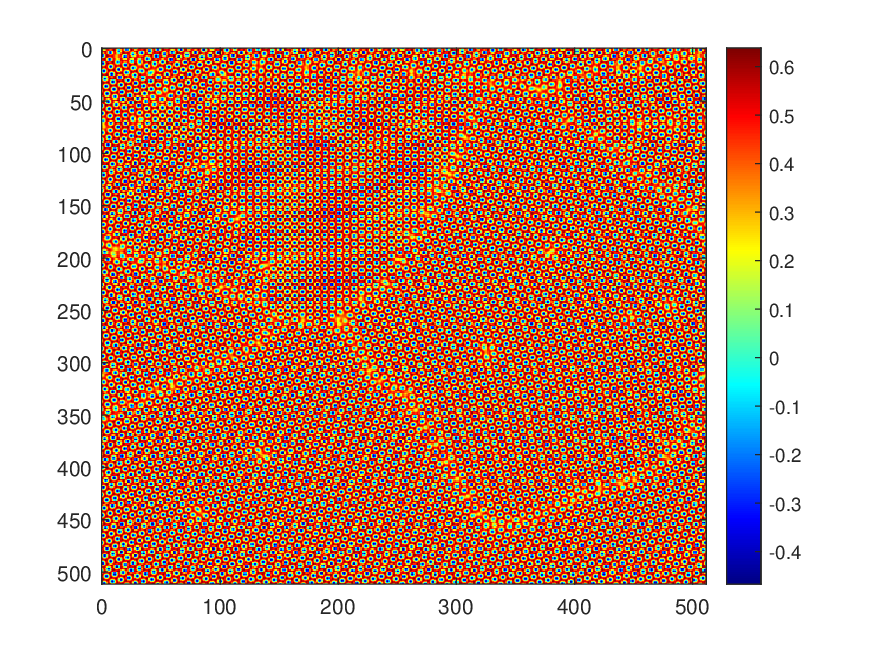}
		\end{minipage}
	}
	\caption{The evolution of crystal growth using the four-stage third-order IMEX-RK method \eqref{eq2.8} with $\epsilon=0.25$, $a=0.001$ at $t=0,100,200,400,600,1500$, respectively.}
	\label{figure5.6}
\end{figure}
\begin{figure}
	\centering
	\includegraphics[width=0.67\textwidth]{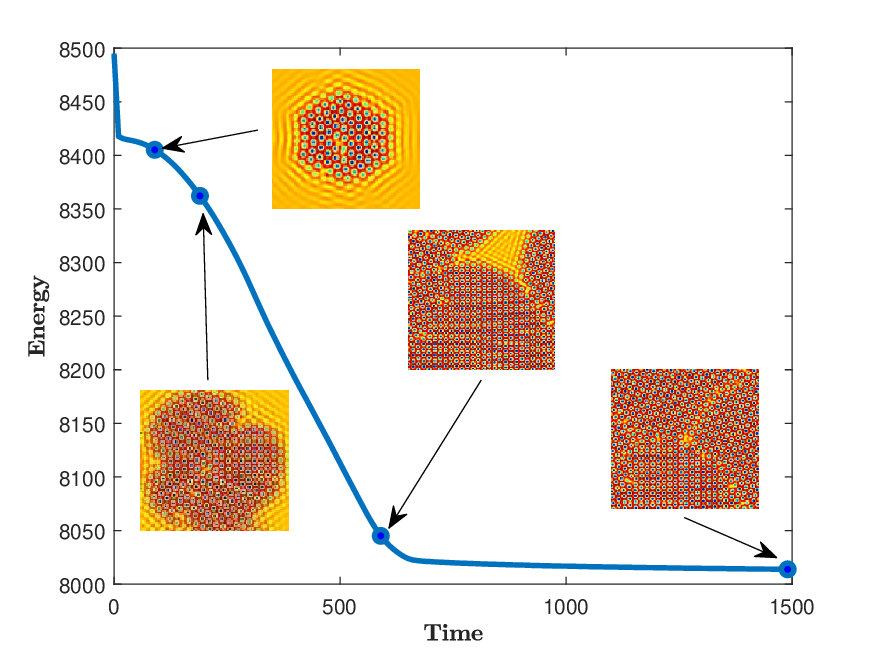}
	\caption{Energy evolution result for the crystal growth. The inserting figure is the evolution of crystal growth at $t=100,200,600,1500$.}
	\label{figure5.7}
\end{figure}

In Figure \ref{figure5.6}, snapshots of the dynamic evolution of crystal growth at time points $t=0,100,200,400,$ $600,1500$ are presented respectively. We can observe that the hexagonal phase is gradually forming between crystallites and finally impinge upon another one at $t=400$, which leads to the grain boundaries caused by the different orientations of the crystallites we choose before. The energy evolution is showed in Figure \ref{figure5.7}, agreeing with the theoretical results. Similar results are also reported in, e.g. \cite{elder2002modeling,elder2004modeling,HU20095323,10.1016/j.jcp.2016.10.020,Qiao2024pfc,Zhang2024pfc}.
\subsection{Effect of \texorpdfstring{$\tau$}{τ} on pattern formation}
Actually, in \eqref{eq1.1}, a third-order term $-\frac{r}{3}\phi^3$ was eliminated in the original PFC model, while retaining $\phi^4$ for simplify, because the third-order term does not affect the qualitative features of the model. However, the third-order term is essential for
accurate simulations of material properties \cite{guo2015modified,chen2018phase}.

Depending on the values of $\epsilon$ and initial conditions according to phase diagrams, referring to \cite{emdadi2016revisiting,guo2015modified,asadi2015review}, PFC model can generate different patterns, such as  striped, hexagonal, and their coexisting phases. In order to examine the evolution of the PFC model with the third-order term from a
random nonequilibrium state to a steady-state pattern structure, we set an initial condition as $0.1+0.5\,\mathrm{rand}(x,y)$ on a small domain, which we take a small hexagon at the center of $\Omega=(0,128)\times(0,128)$ \cite{chen2021phase} in the following test. Then we assign a constant $\psi_0$ to the rest of $\Omega$, and we use $a=0.001$ $\tau=0.1$, $T=1000$.
\begin{figure}[htb]
	\centering
	\includegraphics[width=0.92\textwidth]{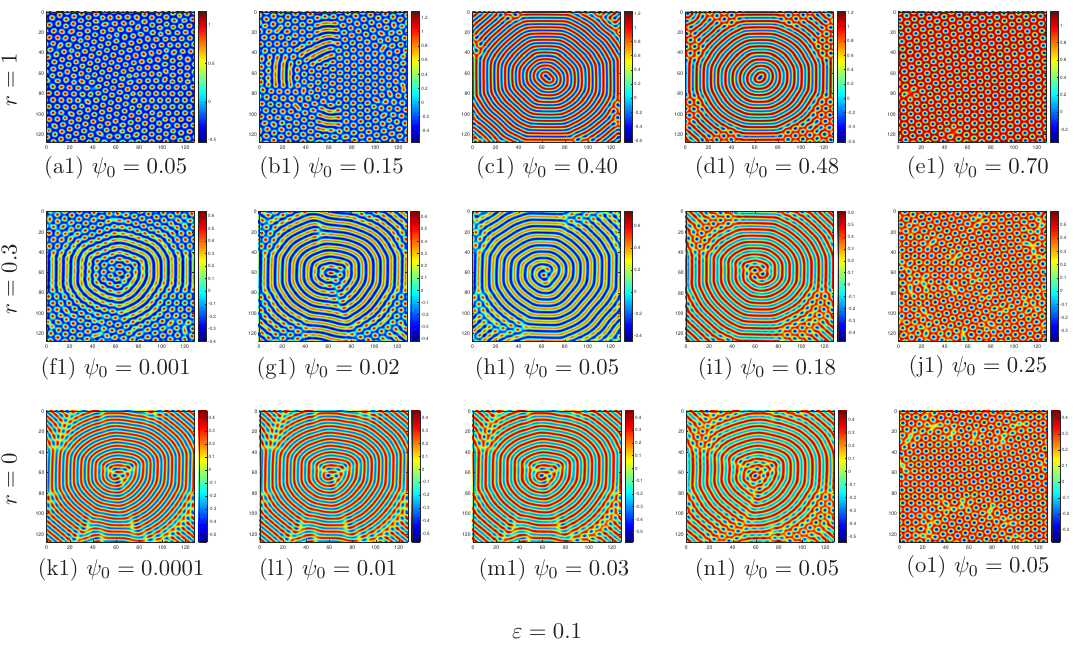}
	\caption{The evolution of the pattern formation using the four-stage third-order IMEX-RK method \eqref{eq2.8} in two dimension with $\epsilon=0.10$, $a=0.001$. $\tau=0.1$ and various of $r$ and $\psi_0$ at $T=1000$.}
	\label{figure5.8.1}
\end{figure}
\begin{figure}[htb]
	\centering
	\includegraphics[width=0.92\textwidth]{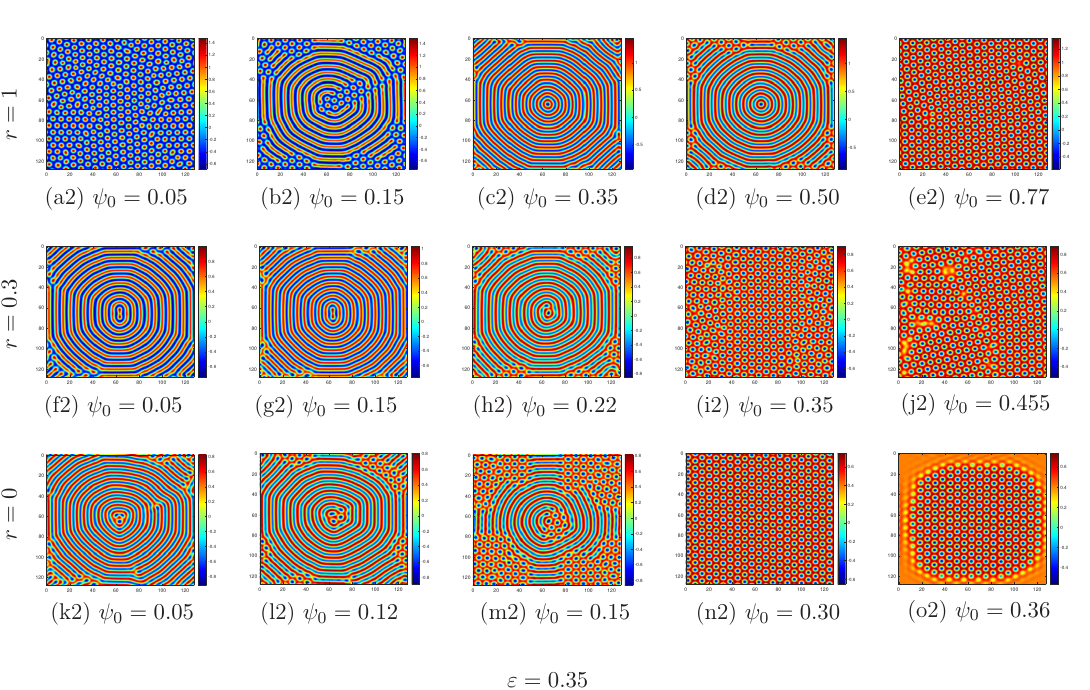}
	\caption{The evolution of the pattern formation using the four-stage third-order IMEX-RK method \eqref{eq2.8} in two dimension with $\epsilon=0.35$, $a=0.001$. $\tau=0.1$ and various of $r$ and $\psi_0$ at $T=1000$.}
	\label{figure5.8.2}
\end{figure}

The pattern formations with $\epsilon=0.1$ and $\epsilon=0.35$, for diverse $r$ and $\psi_0$, are displayed in Figures \ref{figure5.8.1} and \ref{figure5.8.2} respectively. Substantially, with the increase of $\psi_0$, the 
patterns change from stripes to stripes-triangular and triangular, which is consistent with the growth results in \cite{elder2004modeling,elder2002modeling}. Specifically, there exists dots prior to stripes with larger $r$, this also makes the form of steady-state triangular needs larger $\psi_0$. Similarly, triangular-liquid coexistence regions show up easier for lower $r$ when $\psi_0$ is relatively large.
\subsection{3D phase transition behaviors}
In this example, we simulate the results of phase transition behaviors in a cube domain $\Omega=\left[0,32\right]^3$, the parameters are set as $a=0.01$, $\epsilon=0.25$, $\tau=0.1$ and $T=1500$, with $128\time128\time128$ uniform spatial mesh. Parallel to 2D simulation, we define initial condition by
\begin{align*}
	\phi_0=0.285+0.1\,\mathrm{rand}(x,y,z),
\end{align*}
From Figure \ref{figure5.9}, we can see that the simulated dynamic is a process from disorder to order. We show the isosurface of $0.285$ of the numerical solution in the first row of Figure \ref{figure5.9}, and then the slice and surface in the next two rows, respectively.
\begin{figure}[htbp]
	\centering
	\subfigure
	{
		\begin{minipage}[b]{.16\linewidth}
			\centering
			\includegraphics[width=1.4\textwidth]{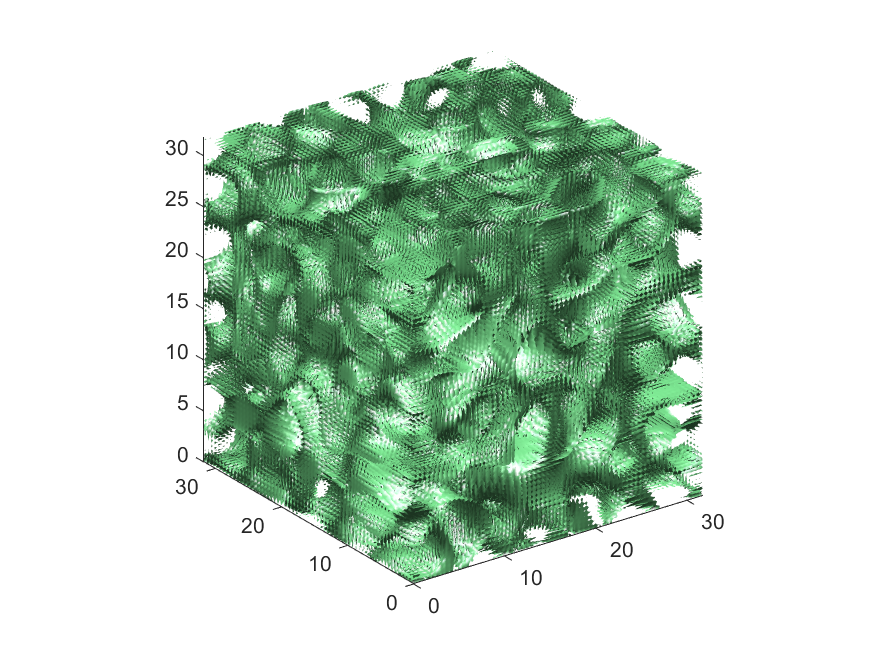}
		\end{minipage}
	}
	\subfigure
	{
		\begin{minipage}[b]{.16\linewidth}
			\centering
			\includegraphics[width=1.4\textwidth]{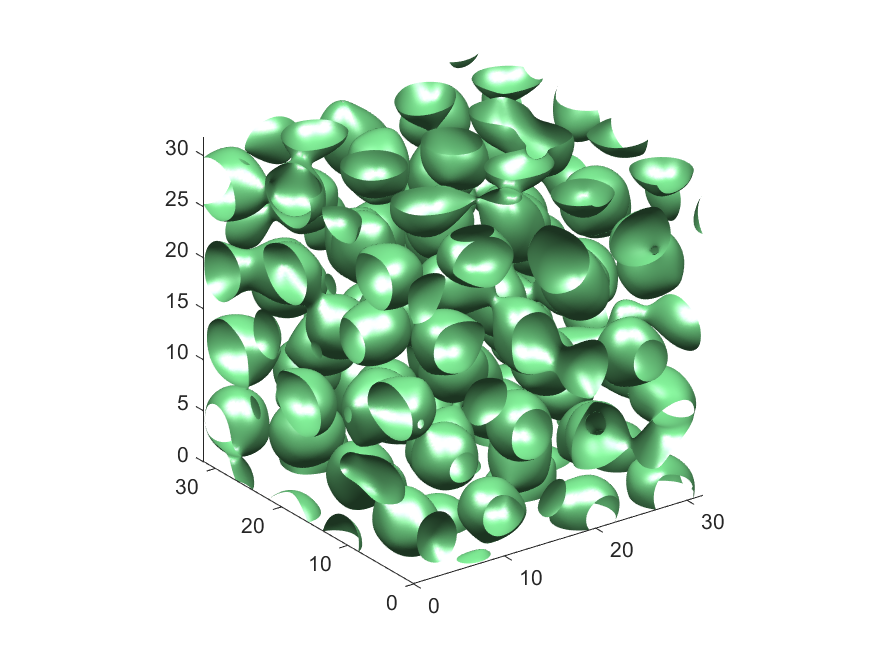}
		\end{minipage}
	}
	\subfigure
	{
		\begin{minipage}[b]{.16\linewidth}
			\centering
			\includegraphics[width=1.4\textwidth]{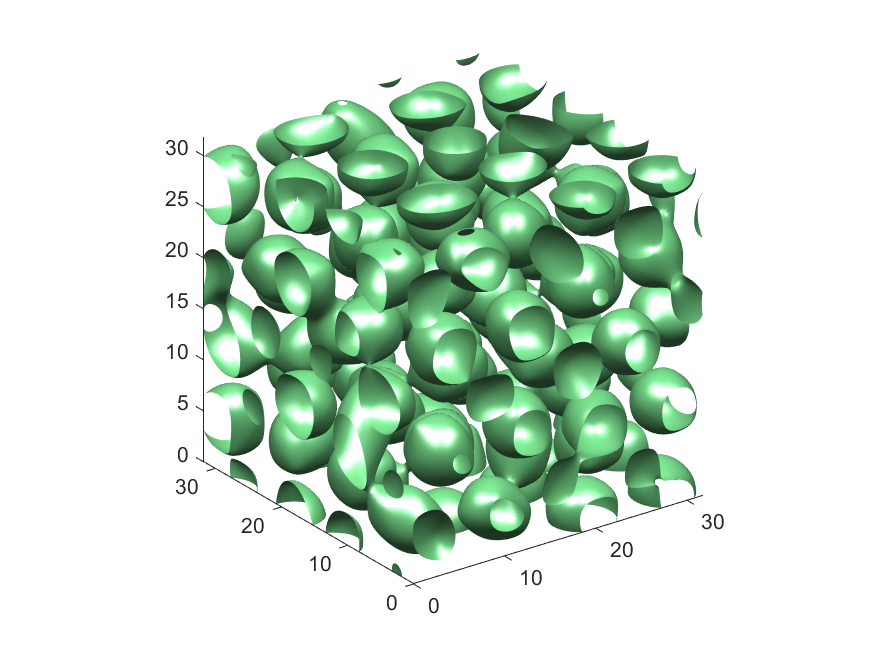}
		\end{minipage}
	}
	\subfigure
	{
		\begin{minipage}[b]{.16\linewidth}
			\centering
			\includegraphics[width=1.4\textwidth]{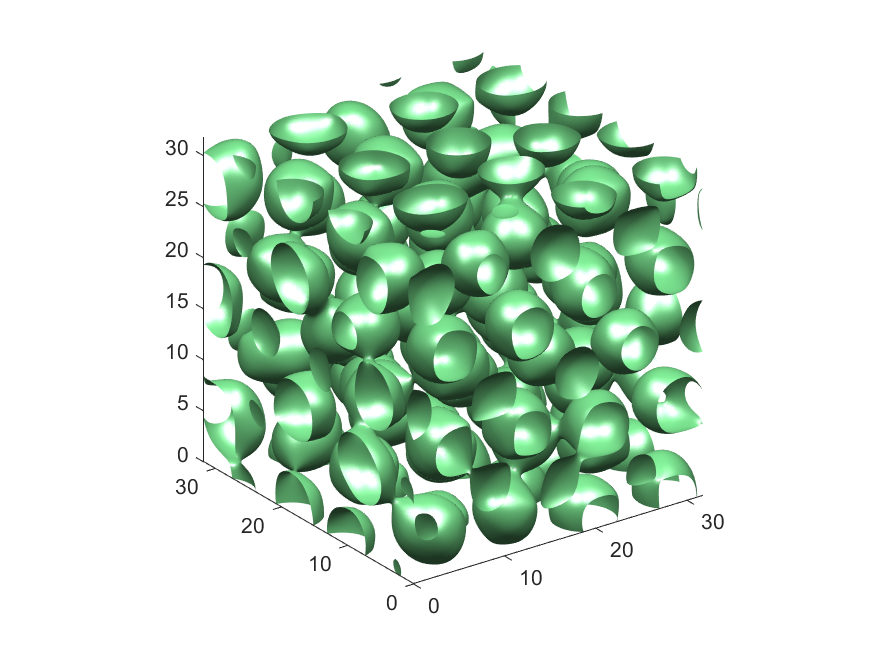}
		\end{minipage}
	}
	
	\subfigure
	{
		\begin{minipage}[b]{.16\linewidth}
			\centering
			\includegraphics[width=1.4\textwidth]{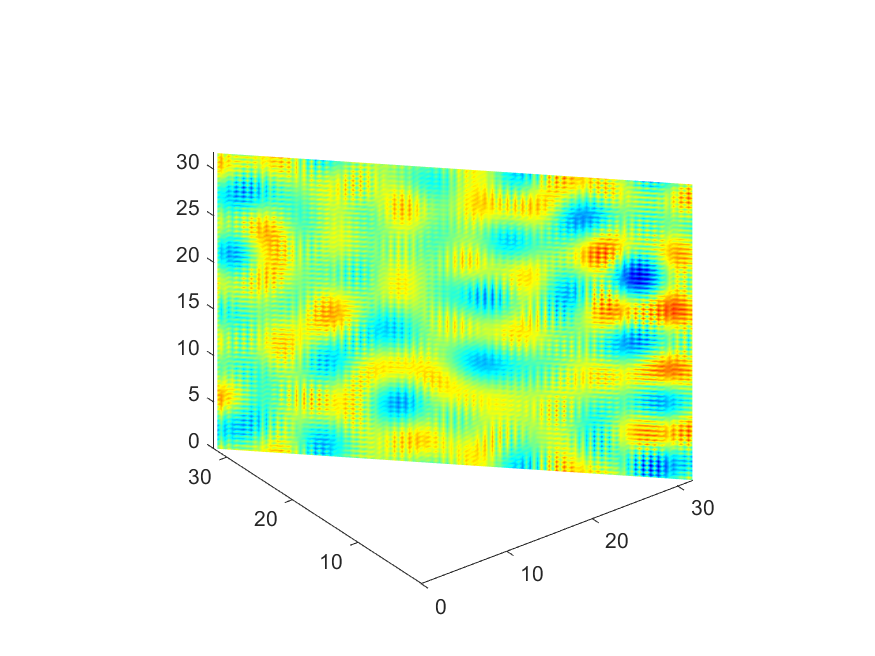}
		\end{minipage}
	}
	\subfigure
	{
		\begin{minipage}[b]{.16\linewidth}
			\centering
			\includegraphics[width=1.4\textwidth]{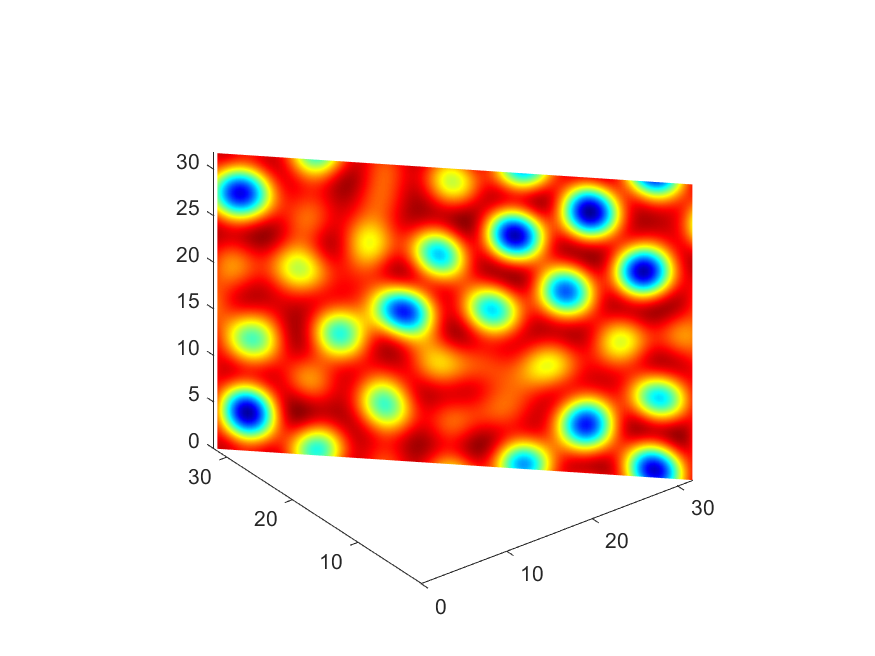}
		\end{minipage}
	}
	\subfigure
	{
		\begin{minipage}[b]{.16\linewidth}
			\centering
			\includegraphics[width=1.4\textwidth]{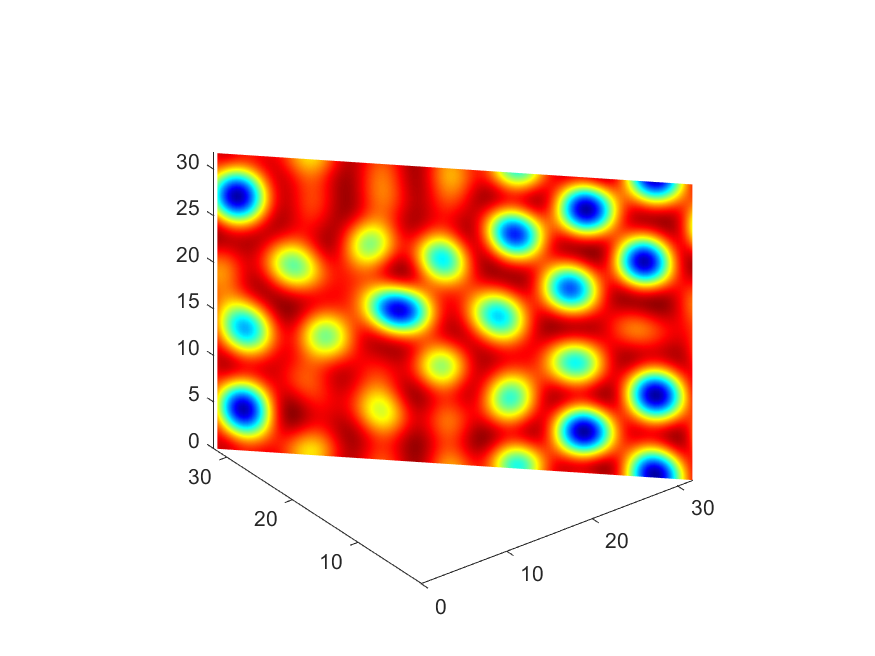}
		\end{minipage}
	}
	\subfigure
	{
		\begin{minipage}[b]{.16\linewidth}
			\centering
			\includegraphics[width=1.4\textwidth]{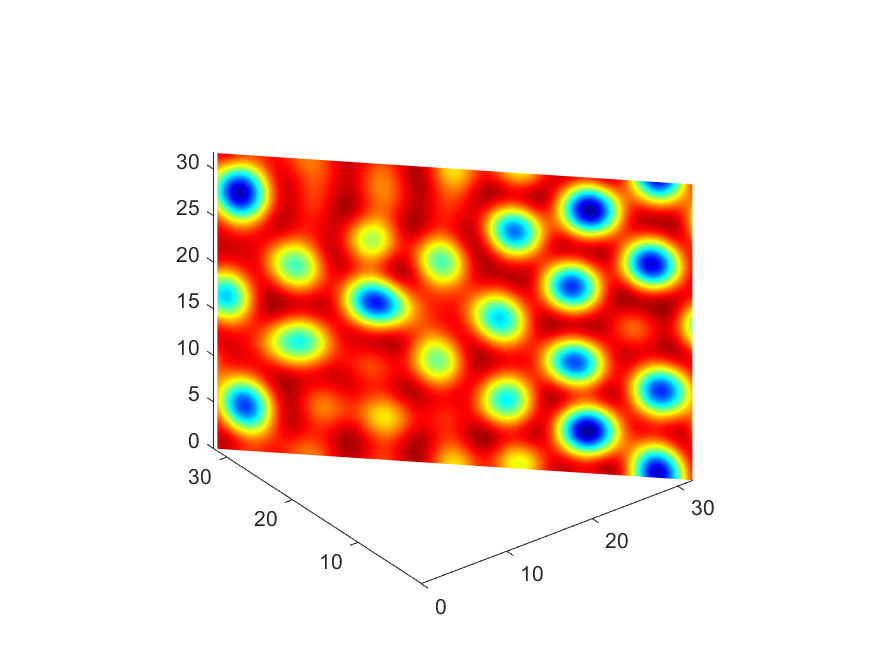}
		\end{minipage}
	}
	
	\subfigure[$t=450$]
	{
		\begin{minipage}[b]{.16\linewidth}
			\centering
			\includegraphics[width=1.4\textwidth]{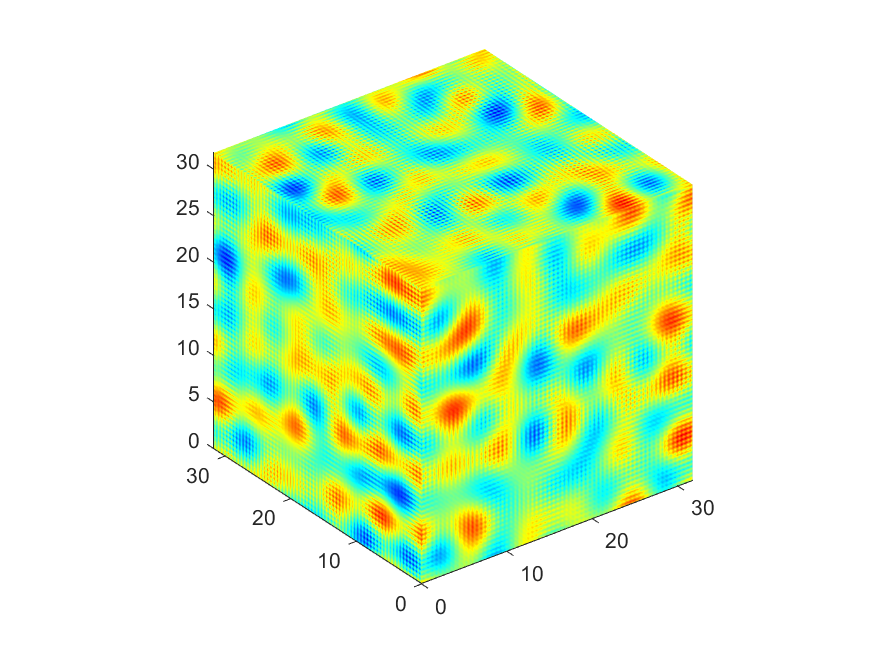}
		\end{minipage}
	}
	\subfigure[$t=700$]
	{
		\begin{minipage}[b]{.16\linewidth}
			\centering
			\includegraphics[width=1.4\textwidth]{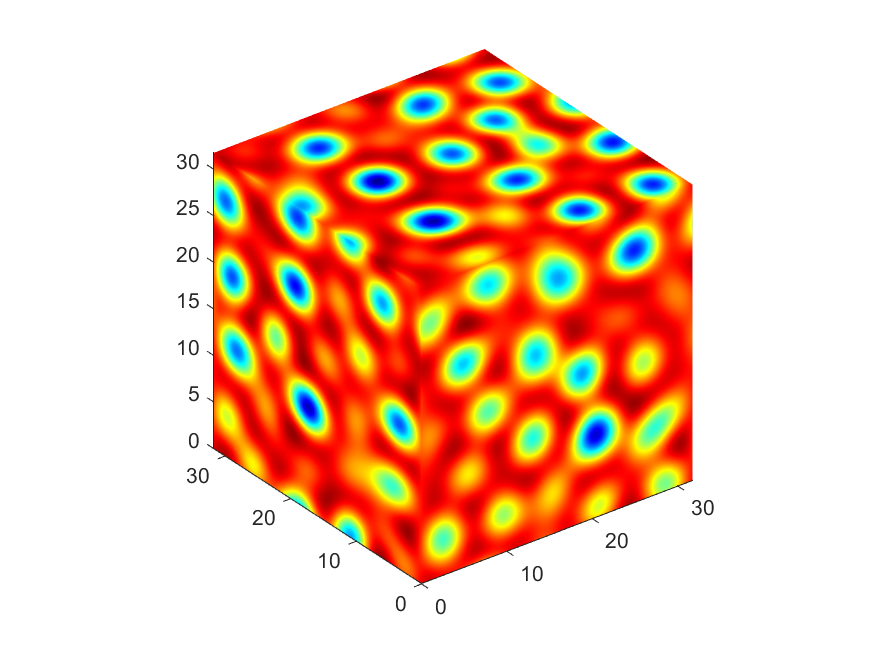}
		\end{minipage}
	}
	\subfigure[$t=900$]
	{
		\begin{minipage}[b]{.16\linewidth}
			\centering
			\includegraphics[width=1.4\textwidth]{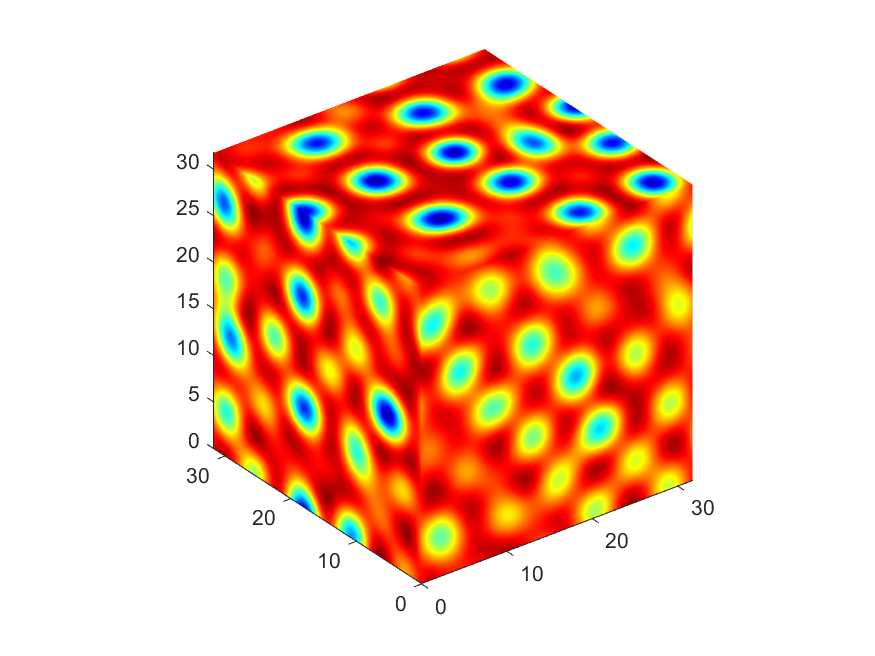}
		\end{minipage}
	}
	\subfigure[$t=1500$]
	{
		\begin{minipage}[b]{.16\linewidth}
			\centering
			\includegraphics[width=1.4\textwidth]{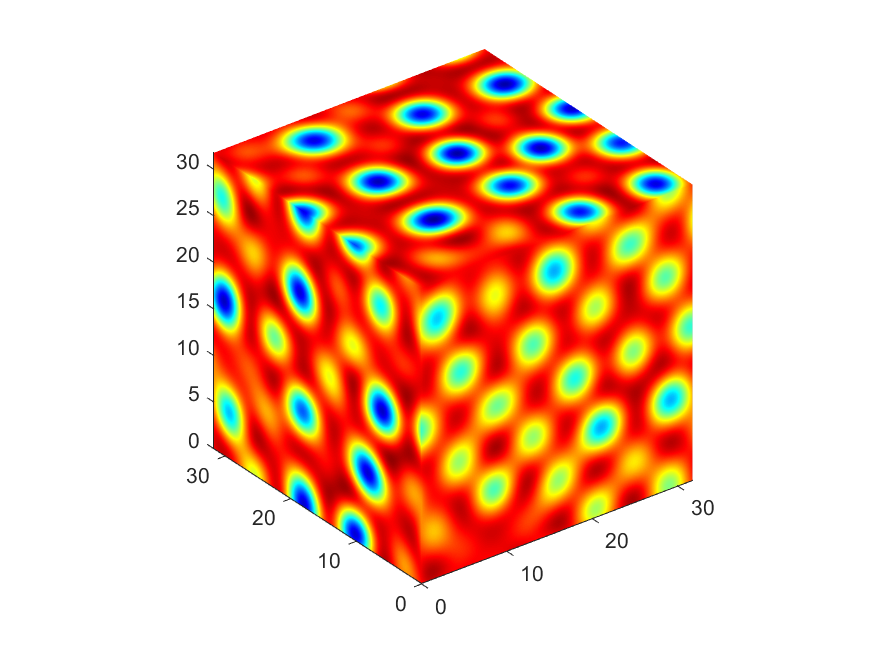}
		\end{minipage}
	}
	\caption{The evolution of 3D transition behaviors using the four-stage third-order IMEX-RK method \eqref{eq2.8} with $\epsilon=0.25$, $a=0.01$, $\tau=0.1$ at $t=450,700,900,1500$, respectively.}
	\label{figure5.9}
\end{figure}

\section{Conclusion}
In this work, we have demonstrated that the constructed IMEX-RK methods achieve 
$L^{\infty}$-convergence and uniform boundedness for the PFC model by introducing an auxiliary problem that effectively eliminates the Lipschitz nonlinearity commonly encountered in gradient flows. The analysis leverages Cauchy's interface theorem at the matrix level, facilitated by the uniform boundedness of the intermediate numerical solutions. Specifically, within the auxiliary problem, the fourth-order nonlinear term is truncated to exhibit quadratic growth, thereby inherently satisfying Lipschitz continuity. Moreover, we establish an unconditional energy dissipation result for the auxiliary problem. Subsequently, we prove that the auxiliary problem and the original PFC model yield identical numerical solutions and give error analysis.

Two promising directions for future research are identified: First, although stabilizer techniques are effective, they tend to increase numerical errors, indicating that exponential time differencing (ETD) schemes may serve as more accurate alternatives. Notably, there have been recent results demonstrating the application of ETD methods to gradient flows \cite{fuyang22,fu2025higher}. Second, extending the current framework to encompass other classes of gradient flows \cite{doi:10.1142/S0218202524500441} represents a promising avenue for further investigation.

\section*{Acknowledgments}
The work of X. Li is partially supported by the National Natural Science Foundation of China (Grant Nos. 12271302, 12131014) and Shandong Provincial Natural Science Foundation for Outstanding Youth Scholar (Grant No. ZR2024JQ030). The work of J. Yang is partially supported by the National Science Foundation
of China (No. 12271240, 12426312), the fund of the Guangdong Provincial Key Laboratory of Computational Science and Material Design, China (No. 2019B030301001), and the Shenzhen NaturalScience Fund (RCJC20210609103819018).

\bibliography{PFC-IMEX}
\end{document}